\documentclass[a4paper,12pt]{amsart}
\usepackage[top=3cm,bottom=3cm,outer=3cm,inner=2cm,marginpar=2.45cm]{geometry}
\usepackage[abbrev]{amsrefs}

\usepackage[frenchb,ngerman,english]{babel}
\usepackage[utf8]{inputenc}
\usepackage[T1]{fontenc}
\usepackage{enumitem}
\usepackage{tabu}
\usepackage[all]{xy}
\usepackage{mathtools} 
\usepackage[usenames,dvipsnames]{xcolor}
\usepackage{calc} 
\usepackage{array}
\usepackage{cancel} 

\babeltags{de = ngerman}
\babeltags{fr = french}

\usepackage[show-mario]{marionotations}

\usepackage[final,colorlinks=true]{hyperref}

\NewDocumentCommand{\sm}{s m}{{#2}\IfBooleanTF{#1}{_}{^}\text{sm}}
\NewDocumentCommand{\dep}{t{_} d<> O{k} m}{#4\IfBooleanTF{#1}{_}{^}{\IfNoValueF{#2}{#2\:}(#3)}}
\newcommand{\tlog}{{\text{log}}}


\NewDocumentCommand{\lelong}{m O{x}}{\nu\paren{#1,#2}}

\newcommand{\rs}[1]{\widetilde{#1}} 
\newcommand{\lift}[1]{\widetilde{#1}} 

\newcommand{\defidlof}[1]{\mathcal{I}_{#1}}  
\newcommand{\mtidlof}[2][]{\multidl_{#1}\paren{#2}} 
\DeclareMathOperator{\Ann}{Ann}  
\DeclareMathOperator{\mlc}{mlc} 
\DeclareMathOperator{\lc}{lc} 
\DeclareMathOperator{\sym}{sym} 

\NewDocumentCommand{\lcc}{ 
  O{\sigma}                
  D<>{X}                   
  D(){S}
}{\lc_{#2}^{#1}\paren{#3}}

\NewDocumentCommand{\Ohvol}{ 
  D<>{\dep\vphi_L} 
  D<>{S} 
  D<>{\omega} 
  O{\psi_{S}}
}{\:d\vol_{#2,#3,#1}[#4]} 

\NewDocumentCommand{\gOhvol}{ 
  O{m_1}          
  D<>{\dep\vphi_L}  
  D<>{S}            
  D<>{\omega}       
  O{\dep\psi}     
  m
}{\abs{J^{#1}#6}_{#4}^2 \Ohvol<#2><#3><#4>[#5]} 

\NewDocumentCommand{\lcV}{ 
  D||{\sigma}      
  D<>{\vphi_L}   
  D<>{\omega}    
  O{\psi}        
}{\:d\operatorname{lcv}^{#1}_{#3,#2}\left[#4\right]}

\NewDocumentCommand{\idxup}{ 
  m          
  O{\omega}  
}{\paren{#1}^{\mathrlap{\!#2}}}

\newcommand{\RTFsym}{\mathfrak{F}} 
\NewDocumentCommand{\RTF}{ 
  s    
  G{\RTFsym} 
  d//  
  o    
  >{\SplitArgument{1}{,}} d<> 
  d||  
  d()  
  o    
}{%
  \begingroup%
    \newif\ifboolup%
    \booluptrue%
    \IfNoValueT{#4}{\IfNoValueT{#5}{\IfNoValueT{#6}{\boolupfalse}}}%
    \IfNoValueT{#7}{\boolupfalse}%
    \newcommand{\srptstr}{\cramped{{}^{\IfNoValueF{#4}{#4}\IfNoValueF{#5}{\inner#5}\IfNoValueF{#6}{\abs{#6}^2}}%
      \ifboolup _
      \fi{\ifboolup\displaystyle\fi\IfNoValueF{#7}{\paren{#7}}\IfNoValueF{#8}{%
          \ifboolup {\scriptstyle #8} \else _{#8} \fi%
        }}}}%
    \ifboolup%
      \IfBooleanTF{#1}{
        \smash[t]{
          \IfNoValueF{#3}{{}_{#3}}#2\raisebox{\depthof{$\srptstr$} * \real{0.3}}{$\srptstr$}%
        }%
      }{\IfNoValueF{#3}{{}_{#3}}#2\raisebox{\depthof{$\srptstr$} * \real{0.3}}{$\srptstr$}}%
    \else%
      \IfNoValueF{#3}{{}_{#3}}#2\srptstr%
    \fi%
  \endgroup%
} 


\NewDocumentCommand{\RTFmatrix}{
  O{k\conj\ell} d()
}{ 
  \begin{bmatrix}
    \edef\botharg{[#1]\IfNoValueTF{#2}{\relax}{(#2)}}
    \expandafter\RTF\botharg
  \end{bmatrix}
}

\NewDocumentCommand{\RTFmat}{s m o D(){\eps}}{
  \edef\args{\IfBooleanT{#1}{*}[#2^{#4} \:\conj{\IfNoValueTF{#3}{#2}{#3}^{#4}}](#4)}
  \expandafter\RTF\args
}

\NewDocumentCommand{\fRTF}{s D(){\eps}}{
  \IfBooleanTF{#1}{
    \mathrlap{\RTF*[\rho_\gamma]|f|(#2)}
    \hphantom{\RTF*[\rho_\gamma|f|](#2)} 
  }{
    \mathrlap{\RTF[\rho_\gamma]|f|(#2)}
    \hphantom{\RTF[\rho_\gamma|f|](#2)}
  }
}

\NewDocumentCommand{\fRTFs}{O{0} D(){\eps} t_}{
  \mathrlap{\RTF{\mathfrak{I}}[F_{#1}](#2)}\IfBooleanTF{#3}{
    {\phantom{\RTF{\mathfrak{I}}(#2)}}_
  }{
    \phantom{\RTF{\mathfrak{I}}[F_{#1}](#2)}
  }
}

\NewDocumentCommand{\spH}{s O{\sigma+1}}{\IfBooleanTF{#1}{\mathcal{H}_\sigma}{\mathcal{H}_{#2}}}
\NewDocumentCommand{\spE}{O{\sigma}}{\mathcal{E}_{#1}} 
\NewDocumentCommand{\dimE}{}{{N_{\spE}}} 

\newcommand{\wphi}{\widetilde{\vphi}}


\newcommand{\alert}[2][blue]{{\color{#1}#2}}

\newtheorem{prop}{Proposition}[subsection]
\newtheorem{thm}[prop]{Theorem}
\newtheorem{cor}[prop]{Corollary}

\newtheorem{conjecture}[prop]{Conjecture}

\theoremstyle{remark}
\newtheorem{remark}[prop]{Remark}

\theoremstyle{definition}
\newtheorem{example}[prop]{Example}
\newtheorem{notation}[prop]{Notation}
\newtheorem{SNCassumption}[prop]{Snc assumption}
\newtheorem{definition}[prop]{Definition}

\numberwithin{equation}{subsection}

\allowdisplaybreaks  



\begin{document}

\newcommand{\titlestr}{%
  On an $L^2$ extension theorem from log-canonical centres with
  log-canonical measures%
}

\newcommand{\shorttitlestr}{%
  On an $L^2$ extension theorem from lc centres with lc-measures%
}

\newcommand{\MCname}{Tsz On Mario Chan}
\newcommand{\MCnameshort}{Mario Chan}
\newcommand{\MCemail}{mariochan@pusan.ac.kr}

\newcommand{\addressstr}{%
  Dept.~of Mathematics, Pusan National
  University, Busan 46241, South Korea%
}

\newcommand{\subjclassstr}[1][,]{%
  32J25 (primary)#1
  32Q15#1
  14E30 (secondary)%
}

\newcommand{\keywordstr}[1][,]{%
  $L^2$ extension#1
  Ohsawa--Takegoshi extension#1
  lc centres%
}

\newcommand{\thankstr}{%
  The author would like to thank Young-Jun Choi for his support
  and encouragement on publishing this work.
  This work was supported by the National Research Foundation (NRF) of
  Korea grant funded by the Korea government (No.~2018R1C1B3005963).
}


\title[\shorttitlestr]{\titlestr}
 
\author[\MCnameshort]{\MCname}
\email{\MCemail}
\address{\addressstr}
 
 
\thanks{\thankstr}
 
\subjclass[2010]{\subjclassstr}

\keywords{\keywordstr}


\begin{abstract}

With a view to prove an Ohsawa--Takegoshi type $L^2$ extension
theorem with $L^2$ estimates given with respect to the log-canonical
(lc) measures, a sequence of measures each supported on lc
centres of specific codimension defined via multiplier ideal sheaves, 
this article is aiming at providing evidence and possible means to
prove the $L^2$ estimates on compact \textde{Kähler} manifolds $X$.
A holomorphic family of $L^2$ norms on the ambient space $X$ is
introduced which is shown to ``deform holomorphically'' to an $L^2$
norm with respect to an lc-measure.
Moreover, the latter norm is shown to be invariant under a certain
normalisation which leads to a ``non-universal'' $L^2$ estimate on
compact $X$.
Explicit examples on $\proj^3$ with detailed computation are presented
to verify the expected $L^2$ estimates for extensions from lc centres
of various codimensions and to provide hint for the proof of the
estimates in general.


\end{abstract} 

\date{\today} 

\maketitle



\section{Introduction and preparation}
\label{sec:intro}

\subsection{Background and main results}


In \cite{Chan&Choi_ext-with-lcv-codim-1}, an Ohsawa--Takegoshi type $L^2$
extension theorem for holomorphic sections on compact \textde{Kähler}
manifolds with estimates with respect to $1$-lc-measures is proved.
With only minor generalisations to the assumptions on the potential
$\vphi_L$ and the global function $\psi$ (see Section \ref{sec:setup} for
details), the claim in \cite{Chan&Choi_ext-with-lcv-codim-1}*{Thm.~1.4.5}
is essentially the same as the quantitative extension in
\cite{Demailly_extension}*{Thm.~(2.12)} on compact \textde{Kähler}
manifolds (see also \cite{Cao&Demailly&Matsumura}) but with the Ohsawa
measure in the estimate replaced by the $1$-lc-measure defined in
\cite{Chan&Choi_ext-with-lcv-codim-1}.
The proof of \cite{Chan&Choi_ext-with-lcv-codim-1}*{Thm.~1.4.5} works
only when the section to be extended is defined on log-canonical (lc)
centres of codimension $1$ but vanishing on those of codimension $2$
and higher.
However, in view of the conjecture of dlt extension
(\cite{DHP}*{Conj.~1.3}), it is crucial to have some sort of estimates 
for holomorphic extensions of sections on lc centres of codimension $2$
or higher (see \cite{Chan&Choi_ext-with-lcv-codim-1}*{\S 1.1} for a
brief account).

While the result in \cite{Chan&Choi_ext-with-lcv-codim-1}*{Thm.~1.4.5}
is no better than the known results from all previous studies on
the same topic, it is a precursor of the study of $L^2$ estimates of
holomorphic extensions of sections on lc centres of higher
codimensions in terms of lc-measures.
The example of Berndtsson (see
\cite{Chan&Choi_ext-with-lcv-codim-1}*{Example 2.3.2} or
\cite{Cao&Paun_OT-ext}*{Appendix A.3}) hints that expressing the $L^2$
estimates in terms of various lc-measures is possible.
The goal of this paper is to further investigate in this direction.

Let $X$ be a compact \textde{Kähler} manifold, $(L,e^{-\vphi_L})$ a
holomorphic line bundle on $X$ endowed with a singular hermitian
metric $e^{-\vphi_L}$, and $\psi$ a global negative function with
poles on $X$.
Assuming that both $\vphi_L$ and $\psi$ have neat analytic
singularities and that there is a $\delta >0$ such that
$\vphi_L+\paren{1+\beta}\psi$ is plurisubharmonic (psh) for all $\beta
\in [0,\delta]$.
Let $S \subset \psi^{-1}\paren{-\infty}$ be the subvariety
which is the scheme-theoretic difference between the subvarieties
defined by the multiplier ideal sheaves $\mtidlof{\vphi_L+\psi}$ and
$\mtidlof{\vphi_L}$ and suppose that $S$ is reduced
(i.e.~$\mtidlof{\vphi_L+m\psi}$ does not ``jump'' more than once along
the same subvariety when $m$ increases within $[0,1]$; see Section
\ref{sec:setup} for more precise assumptions on $\vphi_L$ and $\psi$
and definition of $S$).
Denote the defining ideal sheaf of the union of lc centres $\lcc
=\lcc(\vphi_L,\psi)$ of $(X,\vphi_L,\psi)$ of codimension $\sigma$ by
$\defidlof{\lcc}$ (see Definition
\ref{def:union-of-sigma-lc-centres}).

Following the analysis of Berndtsson (\cite{Cao&Paun_OT-ext}*{Appendix
  A.3} or \cite{Chan&Choi_ext-with-lcv-codim-1}*{Example 2.3.2}),
set
\begin{equation}
  \label{eq:section-spaces}
  \spH[\sigma] := \cohgp0[X]{K_X \otimes L \otimes \mtidlof{\vphi_L}
    \cdot \defidlof{\lcc}} \; ,
\end{equation}
the space of holomorphic sections of $K_X \otimes L \otimes
\mtidlof{\vphi_L}$ vanishing on $\lcc$.\footnote{Strictly speaking,
  since $\vphi_L$ is allowed to have poles along any lc centres of
  $(X,S)$ and thus sections of $\mtidlof{\vphi_L}$ can vanish along
  those centres, the precise description should be ``the space of
  holomorphic sections of $K_X \otimes L \otimes \mtidlof{\vphi_L}$
  with an extra vanishing order along $\lcc$.''} 
One then obtains the filtration
\begin{equation*}
  \cohgp0[X]{K_X \otimes L \otimes \mtidlof{\vphi_L}}
  =\spH[\sigma_{\mlc}+1] \supset \spH[\sigma_{\mlc}] \supset \dots
  \supset \spH[1]
  =\cohgp0[X]{K_X \otimes L \otimes \mtidlof{\vphi_L+\psi}} \; ,\footnotemark
\end{equation*}
\footnotetext{Notice that the inclusions need not be strict in
  general.
  Moreover, the equality $\mtidlof{\vphi_L} \cdot \defidlof{S} =
  \mtidlof{\vphi_L+\psi}$
  is due to the assumption that $\mtidlof{\vphi_L+m\psi}$ for $m \in
  [0,1]$ ``jumps'' along $S$ exactly when $m=1$.}%
where $\sigma_{\mlc} \leq n := \dim X$ is the codimension of the
minimal lc centres (mlc) of $(X,\vphi_L,\psi)$. 
By the ``qualitative extension'' of Demailly (\cite{Demailly_extension},
see also \cite{Cao&Demailly&Matsumura}), it follows that, under the
psh assumption on $\vphi_L +\paren{1+\beta} \psi$, existence of
extension is guaranteed and thus
\begin{equation*}
  \spH[\sigma_{\mlc}+1] / \spH[1] \isom \cohgp0[S]{K_X \otimes L
    \otimes \frac{\mtidlof{\vphi_L}}{\mtidlof{\vphi_L+\psi} }} \; .
\end{equation*}
Indeed, even if the isomorphism does not hold true (e.g.~when $\vphi_L
+\paren{1+\beta} \psi$ is not psh), one can still discuss about the
$L^2$ estimates for ``extendible'' holomorphic sections, i.e.~sections
in $\spH[\sigma_{\mlc}+1]/\spH[1]$.
The strategy to obtain an $L^2$ estimate for some holomorphic
extension of $f \in \spH[\sigma_{\mlc}+1] / \spH[1]$ is to consider
the orthogonal decomposition
\begin{equation*}
  \spH = \spH[\sigma] \oplus \spE
\end{equation*}
with respect to certain $L^2$ norm for each $\sigma = 1,2,\dots,
\sigma_{\mlc}$ and to obtain a minimal element $F_\sigma \in \spE$ for
each $\sigma$ such that each $F_\sigma$ comes with an $L^2$ estimate
and
\begin{equation*}
  \sum_{\sigma = 1}^{\sigma_{\mlc}} F_\sigma \equiv f \mod
  \mtidlof{\vphi_L +\psi} \quad\text{on }X
\end{equation*}
(see Notation \ref{notation:extension-general}).

The goal is to express the $L^2$ estimate on $F_\sigma$ in terms of
$\sigma$-lc-measure defined in \cite{Chan&Choi_ext-with-lcv-codim-1}.
The definition of $\sigma$-lc-measure is recalled in Definition
\ref{def:lc-measure} for reader's convenience.
Instead of considering a single norm on all subspaces $\spH$, the
following norms are introduced to each of the subspaces.
\begin{definition}
  Fix any number $\ell > 0$ such that $\abs{\ell\psi} >1$ on $X$. 
  For any $\eps > 0$ and any smooth $L\otimes \conj L$-valued
  $(n,n)$-form $G$ on $X$, set
  \begin{equation*}
    \RTF[G](\eps)[\sigma] := \RTF[G](\eps) := \eps\int_X \frac{G
      \:e^{-\vphi_L-\psi}}{\abs\psi^\sigma
      \paren{\log\abs{\ell\psi}}^{1+\eps}} \; .
  \end{equation*}
  It is denoted by $\RTF|f|(\eps)[\sigma]$ when $G = \abs f^2$ for some $f \in
  \spH[\sigma_{\mlc}+1]$ and $\RTF<f,g>(\eps)[\sigma]$ when $G = c_n f
  \wedge \conj g$ (see Notation \ref{notation:norm-of-n0-form})
  for some $f, g \in \spH[\sigma_{\mlc}+1]$.
  (Here, $\RTF[G](\eps)[\sigma]$, $\RTF|f|(\eps)[\sigma]$ or
  $\RTF<f,g>(\eps)[\sigma]$ may diverge if they come without further
  restrictions.)
  For convenience, 
  \begin{equation*}
    \text{the value } \RTF|f|(0)[\sigma] := \lim_{\eps \tendsto 0^+}
    \RTF|f|(\eps)[\sigma]
    \quad\text{and}\quad
    \text{the function } \eps \mapsto \RTF|f|(\eps)[\sigma]
  \end{equation*}
  are respectively named the \emph{residue (squared) norm} and the
  \emph{residue function} of $f$ for the lc centres of $(X,S)$ of
  codimension $\sigma$ (the naming can be justified by Theorem
  \ref{thm:RTF-properties}).
\end{definition}

It turns out that one has the following Theorem
\ref{thm:RTF-properties}, which is proved in this paper. 
\begin{thm}[ref.~Prop.~\ref{prop:RTF-int-by-parts-formula},
  Thm.~\ref{thm:RTF-entire} and Cor.~\ref{cor:RTF-at-0}] \label{thm:RTF-properties}
  For any $f \in \spH$,
  \begin{itemize}
  \item the integral $\RTF|f|(\eps)[\sigma]$ is convergent for any
    $\eps >0$,
  \item the function $\eps \mapsto \RTF|f|(\eps)[\sigma]$ can be analytically
    continued to an entire function,\footnote{Analytic
      continuation of the residue function $\eps \mapsto
      \RTF|f|(\eps)[\sigma]$ across $0$ is suggested already by the
      study of residue currents in \cite{Bjork&Samuelsson},
      \cite{Samuelsson_residue} and \cite{ASWY-gKingsFormula}.
      See \cite{Chan&Choi_ext-with-lcv-codim-1}*{\S 1.4} for a
      discussion.
    } and 
  \item $\RTF|f|(0)[\sigma] = \int_{\lcc} \abs f_\omega^2 \lcV $, the
    squared norm of $f$ on $\lcc$ with respect to the $\sigma$-lc-measure
    $\lcV$, and its value is independent of the normalisation of
    $\log\abs{\ell\psi}$.
  \end{itemize} 
\end{thm}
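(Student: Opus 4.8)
The plan is to prove the three cited statements by reducing the global integral $\RTF|f|(\eps)[\sigma]$ to a one-variable model near the lc centres. Fix a log resolution $\mu\colon\widetilde X\to X$ for which $\mu^{*}\vphi_L$ and $\mu^{*}\psi$ have snc singularities, with $K_{\widetilde X/X}$ supported on the same snc divisor; by the reducedness hypothesis on $S$ the pole divisor of $\mu^{*}\psi$ has coefficient $1$ along its components lying over $S$, and the hypotheses of Section~\ref{sec:setup} (in particular that $\vphi_L+(1+\beta)\psi$ be psh for $\beta\in[0,\delta]$) place the integrand exactly at the log-canonical threshold along the lc centres. Since $c_n f\wedge\conj f\,e^{-\vphi_L}$ is a well-defined semi-positive measure and integration of such is a birational invariant, $\RTF|f|(\eps)[\sigma]$ may be computed on $\widetilde X$; on the locus $\{\psi\le -c\}$ away from the poles the integrand is bounded, so that region contributes a quantity that is $O(\eps)$ — entire in $\eps$, convergent for every $\eps$, and tending to $0$ as $\eps\to0^{+}$ — and it remains to treat a finite sum of chart contributions near $\psi^{-1}(-\infty)$. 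In a chart around a generic point of a codimension-$\sigma$ lc centre $Z=\{z_1=\dots=z_{\sigma}=0\}$ one has $-\psi=\sum_{j=1}^{\sigma}\log\tfrac1{\abs{z_j}^{2}}+O(1)$; the hypothesis $f\in\spH$ (sections vanishing on the codimension-$(\sigma+1)$ lc centres) makes $\abs f^{2}e^{-\vphi_L-\psi}$ at most of the size $w(z)\prod_{j=1}^{\sigma}\abs{z_j}^{-2}$ transverse to $Z$, with $w\ge0$ locally integrable, with strictly positive extra decay along the deeper strata, and with $w|_{Z}$ matching (up to the normalising constants of Definition~\ref{def:lc-measure}) the density of the $\sigma$-lc-measure. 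The chart contribution is then
\[
  \eps\int\frac{w(z)\,\prod_{j=1}^{\sigma}\abs{z_j}^{-2}}{(-\psi)^{\sigma}\left(\log\abs{\ell\psi}\right)^{1+\eps}}\,dV(z)\,.
\]

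\textbf{Reduction to one variable.}
Substitute $s_j:=\log\tfrac1{\abs{z_j}^{2}}\in[S_0,\infty)$ for $j=1,\dots,\sigma$; then $dV(z_j)$ becomes $\pi e^{-s_j}\,ds_j$ after the angular integration, and $\prod_{j=1}^{\sigma}\abs{z_j}^{-2}=e^{s_1+\dots+s_{\sigma}}$ cancels exactly the resulting Gaussian factors. With $t:=-\psi\approx s_1+\dots+s_{\sigma}$, Fubini over the slices $\{s_1+\dots+s_{\sigma}=\tau\}$ — a $(\sigma-1)$-simplex of volume $\tfrac{(\tau-\sigma S_0)^{\sigma-1}}{(\sigma-1)!}$ — together with the transverse integrations turns the chart contribution into
\[
  \eps\int_{t_0}^{\infty}\frac{h(t)\,dt}{t^{\sigma}\left(\log(\ell t)\right)^{1+\eps}}\,,\qquad
  h(t)=c\,t^{\sigma-1}+O\!\left(t^{\sigma-2}\right)\quad(t\to\infty)\,,
\]
where $c=\tfrac{\pi^{\sigma}}{(\sigma-1)!}\int_{Z}w|_{Z}$ depends only on the residue data along $Z$, and the $O(1)$ in $-\psi$, the relative canonical divisor, the higher-codimension strata and the deviation of $w$ from $w|_{Z}$ all feed only into the $O(t^{\sigma-2})$ remainder.

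\textbf{The model computation.}
The identity $\eps\,t^{-1}\!\left(\log(\ell t)\right)^{-1-\eps}dt=-\,d\!\left[\left(\log(\ell t)\right)^{-\eps}\right]$, integrated by parts in $t$, is the model form of Proposition~\ref{prop:RTF-int-by-parts-formula}. Subtracting the leading term, $\eps\int_{t_0}^{\infty}\bigl(h(t)-c\,t^{\sigma-1}\bigr)\,t^{-\sigma}\!\left(\log(\ell t)\right)^{-1-\eps}dt$ has integrand of size $O\!\left(t^{-2}(\log t)^{-1-\Re\eps}\right)$, hence is absolutely convergent, entire in $\eps$, and (by the prefactor) vanishing at $\eps=0$; the leading term contributes $c\,\eps\int_{t_0}^{\infty}t^{-1}\!\left(\log(\ell t)\right)^{-1-\eps}dt=c\,\left(\log(\ell t_0)\right)^{-\eps}$, which is entire. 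Adding the $O(\eps)$ term from $\{\psi\le -c\}$ and summing over charts shows that $\eps\mapsto\RTF|f|(\eps)[\sigma]$ is convergent for $\eps>0$ and extends to an entire function — the first two bullets (Theorem~\ref{thm:RTF-entire}). Letting $\eps\to0^{+}$, every term carrying an explicit factor $\eps$ vanishes and $c\,\left(\log(\ell t_0)\right)^{-\eps}\to c$, so the limit equals $\sum_{\mathrm{charts}}c=\sum_{\mathrm{charts}}\tfrac{\pi^{\sigma}}{(\sigma-1)!}\int_{Z}w|_{Z}$; comparison with Definition~\ref{def:lc-measure} identifies this with $\int_{\lcc}\abs f_\omega^{2}\,\lcV$. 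Since the only $\ell$-dependence surviving at $\eps=0$ sits inside $\left(\log(\ell t_0)\right)^{-\eps}\to1$, and the argument is unchanged when $\log\abs{\ell\psi}$ is replaced by any function comparable to $\log(-\psi)$ near the poles, the limit is independent of the normalisation — Corollary~\ref{cor:RTF-at-0}.

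\textbf{Main obstacle.}
The one-variable estimates above are routine; the work lies in the reduction. The delicate points are: (i) making the passage to the model precise at the \emph{deepest} strata, where many resolved divisors meet and $\vphi_L$ may itself have poles along lc centres (the ``extra vanishing order'' of the footnote), so that the weights $w|_{Z}$ and the combinatorial constants $\tfrac1{(\sigma-1)!}$, $\pi^{\sigma}$ match exactly the normalisation of Definition~\ref{def:lc-measure} and the local residue densities glue to the global $\sigma$-lc-measure; (ii) confirming that $h(t)=c\,t^{\sigma-1}+O(t^{\sigma-2})$ with no logarithmic corrections, i.e.\ that the $O(1)$ in $-\psi$, the relative canonical divisor, the higher-codimension strata and the deviation $w-w|_{Z}$ are genuinely subleading; and (iii) justifying, uniformly for $\eps$ near $0$, the interchange of $\lim_{\eps\to0^{+}}$ with the chart sum and the integral, via a dominated-convergence bound for the subtracted remainder.
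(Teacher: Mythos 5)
Your route (log resolution, chart-by-chart localisation, the substitution $s_j=\log\tfrac1{\abs{z_j}^2}$, co-area slicing in $t=-\psi$, and a single integration by parts in $t$) is a legitimate reorganisation of what the paper does by iterated integration by parts in the radial variables, and your one-variable identity is indeed the model case of Proposition \ref{prop:RTF-int-by-parts-formula}. But as written the argument has a genuine gap: all three bullets rest on the asymptotic $h(t)=c\,t^{\sigma-1}+O\paren{t^{\sigma-2}}$, uniformly and with the constant $c$ equal to the lc-measure density, and this is precisely the content that needs proof — you assert it and then defer its verification (deep strata, absence of logarithmic corrections, interchange of limits) to your ``main obstacle''. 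Concretely, at points where $j_S>\sigma$ branches of the resolved $S$ meet, the hypothesis $f\in\spH$ only yields the decomposition \eqref{eq:pt-norm-in-coordinates} of $\abs f^2e^{-\vphi_L-\psi}$ into diagonal terms of your modelled form plus off-diagonal terms with poles of the shape $\prod_{j\in p}z_j^{-1}\prod_{j'\in p'}\conj{z_{j'}}^{-1}$ with $p\neq p'$; showing that these, together with the transverse klt factors and the smooth part of $\psi$, feed only into the remainder, and that the diagonal contributions carry no logarithmic corrections, is exactly where the induction in Proposition \ref{prop:RTF-int-by-parts-formula} and the evaluation in Corollary \ref{cor:RTF-at-0} do their work. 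Note also that the ``no log corrections'' point is not a technicality for your scheme: with only a saving of $\paren{\log t}^{-2}$ in the error, your continuation argument gives holomorphy on a half-plane $\Re\eps>-2$, not an entire function, so the power saving must actually be established.

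Two further concrete inaccuracies. First, your local model takes $-\psi=\sum_j\log\tfrac1{\abs{z_j}^2}+O(1)$, i.e.\ coefficients $\nu_j=1$ for $\psi$ along the components of $S$; the reducedness of $S$ does not give this (it concerns the jump of $\mtidlof{\vphi_L+m\psi}$ at $m=1$, not the coefficients of $\psi$ itself, cf.\ \eqref{eq:psi-wphi-local-expression}), and in general both the limit and the lc-measure carry factors $\prod_j\nu_{p(j)}^{-1}$, as in Corollary \ref{cor:RTF-at-0}; your constant $c=\tfrac{\pi^\sigma}{(\sigma-1)!}\int_Z w|_Z$ is only the special case $\nu_j\equiv1$. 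Second, the final step ``comparison with Definition \ref{def:lc-measure}'' is not automatic: the $\sigma$-lc-measure is defined by a different regularisation, with $\abs\psi^{\sigma+\eps}$ in the denominator and no logarithmic factor, so one must also evaluate that limit and check that the two densities coincide; the paper does this by computing both sides explicitly and quoting the computation of the lc-measure from Chan--Choi. In sum, the proposal is a plausible alternative organisation of the proof, but as submitted it reduces the theorem to unproven asymptotic and matching claims rather than establishing them.
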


One can then see that $\spH[\sigma]
=\setd{f\in\spH}{\RTF*|f|(0)[\sigma] =0}$ by the computation of
$\sigma$-lc-measure in
\cite{Chan&Choi_ext-with-lcv-codim-1}*{Prop.~2.2.1 and Remark 2.2.3}.

Now, equipped $\spH$ with the squared $L^2$ norm $\RTF|\cdot|(1)[\sigma]$ such
that $\spH = \spH[\sigma] \oplus \spE$ is the orthogonal decomposition
with respect to it.
One then has the following conjecture which is the goal of the current
research.

\begin{conjecture} \label{conj:estimates-on-sigma-lcc}
  There exists a constant $b \geq 1$ which is independent of $\vphi_L$
  and $\psi$ given in Section \ref{sec:setup} such that the following
  holds true.
  Given the normalisation $\log\abs{\ell\psi} \geq b$ on $X$ (by
  adding suitable constant to $\psi$ and/or varying $\ell >0$
  suitably), suppose that $\spH = \spH[\sigma] \oplus \spE$ is the
  orthogonal decomposition with respect to
  $\RTF|\cdot|(1)[\sigma]$.
  For any $f \in \spH[\sigma_{\mlc}+1]/\spH[1]$, one can find
  $F_\sigma \in \spE$ for $\sigma =1, \dots, \sigma_{\mlc}$
  inductively (starting from $\sigma = \sigma_{\mlc}$) such that, for
  every $\sigma = 1, \dots, \sigma_{\mlc}$, 
  \begin{equation*}
    \sum_{j=\sigma}^{\sigma_{\mlc}} F_j \equiv f \mod
    \mtidlof{\vphi_L}\cdot \defidlof{\lcc} \;\;\text{ on } X \;\;\;\footnotemark
  \end{equation*}
  \footnotetext{Here, $f$ is abused to mean its image under the map
    $\spH[\sigma_{\mlc}+1] /\spH[1] \to
    \spH[\sigma_{\mlc}+1] /\spH[\sigma]$.}%
  and  
  \begin{equation*}
    \RTF|F_\sigma|(1)[\sigma]
    \leq \RTF|F_\sigma|(0)[\sigma]
    =\RTF|f - \sum_{j=\sigma+1}^{\sigma_{\mlc}} F_j|(0)[\sigma] \; .
  \end{equation*}
  Consequently, $F := \sum_{\sigma =1}^{\sigma_{\mlc}} F_\sigma$ is an
  holomorphic extension of $f$ which, if further assume that $\abs\psi
  \geq 1$ on $X$, comes with the estimate
  \begin{align*}
    \RTF|F|(1)[\sigma_{\mlc}]
    &=\RTF|F_{\sigma_{\mlc}}|(1)[\sigma_{\mlc}]
      +\RTF|\sum_{\sigma =1}^{\sigma_{\mlc}-1}
      F_\sigma|(1)[\sigma_{\mlc}]
      \qquad
      {\textstyle \paren{\text{since } \sum_{\sigma =1}^{\sigma_{\mlc}-1}
      F_\sigma \in \spH[\sigma_{\mlc}] = \paren{\spE[\sigma_{\mlc}]}^\perp}}
    \\
    &\leq \RTF|F_{\sigma_{\mlc}}|(1)[\sigma_{\mlc}]
      +\RTF|\sum_{\sigma =1}^{\sigma_{\mlc}-1}
      F_\sigma|(1)[\sigma_{\mlc} -1] 
      \leq \dots \leq \sum_{\sigma=1}^{\sigma_{\mlc}}
      \RTF|F_\sigma|(1)[\sigma]
      \leq \sum_{\sigma=1}^{\sigma_{\mlc}}
      \RTF|F_\sigma|(0)[\sigma] \; .
  \end{align*} 
\end{conjecture}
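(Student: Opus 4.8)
\emph{Overall strategy.} The plan is to reduce the whole conjecture to a single sharp Ohsawa--Takegoshi type claim and then prove that claim by a twisted $\bar\partial$-estimate adapted to the residue norms $\RTF|\cdot|(1)[\sigma]$. The claim to isolate is: \emph{there is a universal $b\geq 1$ so that, once $\log\abs{\ell\psi}\geq b$, every class in $\spH[\sigma+1]/\spH[\sigma]$ has a holomorphic representative $\tilde F\in\spH[\sigma+1]$ with $\RTF|\tilde F|(1)[\sigma]\leq\RTF|\tilde F|(0)[\sigma]$}. Granting this, everything else is formal. First I would run the induction from $\sigma=\sigma_{\mlc}$ downwards ($f$ being abused, as in the statement, for a chosen representative in $\spH[\sigma_{\mlc}+1]$): at stage $\sigma$ the residual section $g_\sigma:=f-\sum_{j>\sigma}F_j$ already lies in $\spH[\sigma+1]$, because the stage-$(\sigma+1)$ congruence says precisely that it vanishes along the codimension-$(\sigma+1)$ centres to the order encoded by $\mtidlof{\vphi_L}\cdot\defidlof{\lcc}$ for those centres; I then let $F_\sigma$ be the $\RTF|\cdot|(1)[\sigma]$-orthogonal projection of $g_\sigma$ onto $\spE[\sigma]=\spH[\sigma]^{\perp}$ inside $\spH[\sigma+1]$. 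Then $g_\sigma-F_\sigma\in\spH[\sigma]$ gives the congruence $\sum_{j\geq\sigma}F_j\equiv f$; since $\RTF|\cdot|(0)[\sigma]$ is a positive semi-definite form whose kernel is exactly $\spH[\sigma]$ (by the computation of the $\sigma$-lc-measure in \cite{Chan&Choi_ext-with-lcv-codim-1}*{Prop.~2.2.1}), Cauchy--Schwarz gives $\RTF|F_\sigma|(0)[\sigma]=\RTF|g_\sigma|(0)[\sigma]$, the asserted equality; and since $F_\sigma$ is the $\RTF|\cdot|(1)[\sigma]$-minimal representative of its class while $\RTF|\cdot|(0)[\sigma]$ depends only on the class, applying the isolated claim to $g_\sigma$ yields $\RTF|F_\sigma|(1)[\sigma]\leq\RTF|\tilde F|(1)[\sigma]\leq\RTF|\tilde F|(0)[\sigma]=\RTF|g_\sigma|(0)[\sigma]=\RTF|F_\sigma|(0)[\sigma]$. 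The concluding chain of inequalities in the statement is then pure linear algebra: the Pythagorean identity for $\RTF|\cdot|(1)[j]$, using $\sum_{\sigma<j}F_\sigma\in\spH[j]=\spE[j]^{\perp}$ and $F_j\in\spE[j]$; the monotonicity $\RTF|g|(1)[j]\leq\RTF|g|(1)[j-1]$, valid because $\abs\psi\geq 1$ on $X$; and the key estimate just established.

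\emph{The twisted estimate and the residue limit.} To prove the isolated claim I would fix $\sigma$ and pass to a log resolution $\mu\colon\widetilde X\to X$ on which $\psi$ and all the lc centres become snc, so that the codimension-$\sigma$ locus $\lcc$ pulls back to a disjoint union of coordinate strata and the $\sigma$-lc-measure becomes an iterated Poincaré--Lelong residue along them. Regularising $\psi$ by a decreasing sequence of smooth potentials, choosing a cut-off $\chi_\nu$ supported in a shrinking tube around those strata, and extending the boundary data to a smooth $\widetilde F_0$ with $\bar\partial\widetilde F_0$ vanishing to first order along the strata, I would solve $\bar\partial u_\nu=\bar\partial(\chi_\nu\widetilde F_0)$ by a twisted Bochner--Kodaira--Nakano inequality with weight $\vphi_L+\psi$, a Berndtsson/Demailly type twist $\eta=\eta(\psi)$, $\lambda=\lambda(\psi)$, and the density $\abs\psi^{-\sigma}(\log\abs{\ell\psi})^{-2}$ occurring in $\RTF|\cdot|(1)[\sigma]$ (one keeps $\eps=1$ here, rather than letting $\eps\to 0$): the hypothesis that $\vphi_L+(1+\beta)\psi$ is psh for $\beta\in[0,\delta]$ supplies the Nakano positivity after absorbing $dd^c$ of the twist and of $\log\bigl(\abs\psi^{-\sigma}(\log\abs{\ell\psi})^{-2}\bigr)$, while the factor $\log\abs{\ell\psi}$, finite by the normalisation, absorbs the logarithmic singularity of $\psi$ and keeps the error terms of the twisted estimate of size $O(1/\log\abs{\ell\psi})=O(1/b)$. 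Setting $F_\nu:=\chi_\nu\widetilde F_0-u_\nu$, one gets a holomorphic representative of the class in $\spH[\sigma+1]$ with $\RTF|F_\nu|(1)[\sigma]$ bounded by $\bigl(1+O(1/b)\bigr)$ times the residue-type integral of $\bar\partial\chi_\nu\wedge\widetilde F_0$ over the tube; since $\bar\partial\chi_\nu$ concentrates on the tube, shrinking it (and removing the regularisation of $\psi$) makes this bound converge, by the residue computation behind Theorem \ref{thm:RTF-properties} and \cite{Chan&Choi_ext-with-lcv-codim-1}*{Prop.~2.2.1}, to $\bigl(1+O(1/b)\bigr)\RTF|f|(0)[\sigma]=\bigl(1+O(1/b)\bigr)\int_{\lcc}\abs f^2\,d\operatorname{lcv}^\sigma$. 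Extracting a limit of the $F_\nu$ inside the finite-dimensional space $\spH[\sigma+1]$ produces a holomorphic $\tilde F$ representing the class with $\RTF|\tilde F|(1)[\sigma]\leq\bigl(1+O(1/b)\bigr)\RTF|\tilde F|(0)[\sigma]$.

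\emph{The main obstacle.} The hard part is to upgrade $1+O(1/b)$ to the sharp constant $1$ with a threshold $b$ that is \emph{independent} of $(\vphi_L,\psi)$. As in the optimal Ohsawa--Takegoshi theorem, I expect this to force choosing $\eta(\psi)$, $\lambda(\psi)$ and the auxiliary density as the solution of an ordinary differential equation in the variable $\abs\psi$ (the method of undetermined functions), so that the error term of the twisted estimate vanishes identically in the limit rather than merely being small; the universal $b$ is then exactly the threshold below which that ODE solution loses the concavity and positivity it needs near $\psi^{-1}(-\infty)$. Carrying this out uniformly over all $(\vphi_L,\psi)$ meeting the hypotheses of Section \ref{sec:setup}, and simultaneously over all codimensions $\sigma$ when lc centres of different codimensions intersect --- where the iterated residue acquires cross terms that must be shown to vanish or to carry the favourable sign --- is the real difficulty; the explicit $\proj^3$ computations in this paper are intended as a guide to the correct ODE and to the behaviour of those cross terms. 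A secondary, essentially bookkeeping, task is to check that the twisted estimate, the cut-off construction, and the residue limit are all compatible with the log resolution $\mu$ and independent of its choice.
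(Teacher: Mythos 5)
You are attempting to prove Conjecture \ref{conj:estimates-on-sigma-lcc}, which the paper itself leaves open: the paper proves only the ``non-universal'' estimate (Theorem \ref{thm:non-universal-estimate}), in which the normalising constant is allowed to depend on $(X,\vphi_L+\psi)$, and otherwise verifies the conjectured estimates in explicit examples on $\proj^3$; so there is no proof in the paper to compare yours against. Your formal reduction is sound but carries no new content: the descending induction on $\sigma$, taking $F_\sigma$ to be the $\RTF|\cdot|(1)[\sigma]$-orthogonal projection of the residual section $g_\sigma$ onto $\spE$, the observation that $\RTF|F_\sigma|(0)[\sigma]=\RTF|g_\sigma|(0)[\sigma]$ because $\spH[\sigma]$ lies in the kernel of the semi-definite form $\RTF|\cdot|(0)[\sigma]$, and the final chain via the Pythagorean identity and the monotonicity in the codimension index coming from $\abs\psi\geq 1$, are exactly the bookkeeping already built into the statement of the conjecture. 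All of the substance is pushed into your ``isolated claim'', which for fixed $\sigma$ \emph{is} the conjecture.

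That claim is not proved in your proposal. The twisted Bochner--Kodaira/Ohsawa--Takegoshi sketch would, even if all the unverified steps were carried out (uniformity of the error terms on the log resolution, the klt directions and cross terms $p\neq p'$, the fact that $\vphi_L$ and $\psi$ are only differences of quasi-psh functions so that the regularisation and the positivity needed in the twisted inequality are delicate, and the convergence of the tube integrals to the $\sigma$-lc-measure), yield at best an estimate of the form $\RTF|\tilde F|(1)[\sigma]\leq\paren{1+O(1/b)}\RTF|\tilde F|(0)[\sigma]$, and you do not show that the implied constant is independent of $(\vphi_L,\psi)$ --- the only estimate actually available by the paper's compactness argument has a constant depending on $(X,\vphi_L+\psi)$, and removing that dependence is precisely the open difficulty. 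You yourself flag the upgrade from $1+O(1/b)$ to the sharp constant $1$ with a universal threshold $b$ as ``the main obstacle'' and defer it to an unspecified ODE/undetermined-function argument; the paper's closing remark in Section \ref{sec:examples} points at the same difficulty (the forms $G_\sigma$ in Proposition \ref{prop:RTF-int-by-parts-formula} have no definite sign, and one must partition according to zero loci of derivatives of $\psi$, which produces summands that are not decreasing in $\eps$). So what you have is a reasonable programme, broadly consistent with the direction the paper suggests, but the central estimate remains unestablished and the conjecture is not proved.
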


In view of the conjecture, one can now focus on proving that, for
any fixed $\sigma$, given $f \in \spH / \spH[\sigma]$, there exists $F
\in \spE$ such that $F \equiv f \mod \mtidlof{\vphi_L} \cdot
\defidlof{\lcc}$ and
\begin{equation*}
  \RTF|F|(1)[\sigma] \leq \RTF|F|(0)[\sigma] =\RTF|f|(0)[\sigma]
\end{equation*}
under the normalisation $\log\abs{\ell\psi} \geq b \geq 1$.

\begin{remark}
  The result in \cite{Chan&Choi_ext-with-lcv-codim-1}*{Thm.~1.4.5}
  states in particular that, under the setting in Conjecture
  \ref{conj:estimates-on-sigma-lcc}, any $f \in \spH[\alert{2}] /
  \spH[1]$ has a holomorphic extension $F$ satisfying the estimate
  \begin{equation*}
    \int_X \frac{\abs F^2 \:e^{-\vphi_L-\psi}}{\abs\psi
      \paren{\paren{\log\abs{\ell\psi}}^2 +1}}
    \leq \RTF|f|(0)[1] 
  \end{equation*}
  for some suitably normalised $\log\abs{\ell\psi}$.
  Notice that, given $\log\abs{\ell\psi} > 0$ on $X$, one has
  \begin{equation*}
    \frac{1}{\paren{\log\abs{e\ell\psi}}^2} 
    \leq \frac{1}{\paren{\log\abs{\ell\psi}}^2 +1} \; .
  \end{equation*}
  Therefore, \cite{Chan&Choi_ext-with-lcv-codim-1}*{Thm.~1.4.5} indeed
  implies that Conjecture \ref{conj:estimates-on-sigma-lcc} is true
  when $\sigma_{\mlc} = 1$.
\end{remark}

Although the above conjecture is not proved yet in this paper, one
can prove relatively easily the following ``non-universal'' estimate.
\begin{thm}[ref.~Theorem \ref{thm:non-universal-estimate}]
  \label{thm:non-universal-estimate_intro}
  On a \emph{compact} \textde{Kähler} manifold $X$ with $\vphi_L$ and
  $\psi$ given as above, there exists a constant $C := C\paren{X,
    \vphi_L+\psi} > 0$ such that, when $\psi$ and $\ell >0$ are
  chosen to satisfy the normalisation $\log\abs{\ell\psi} \geq C$ (by
  varying $\ell >0$ or adding suitable constant to $\psi$) while
  $\vphi_L$ is adjusted accordingly so that $\vphi_L+\psi$ is kept
  unchanged, the estimate 
  \begin{equation*}
    \RTF|F|(1)[\sigma] \leq \RTF|F|(0)[\sigma]
  \end{equation*}
  holds for all $F\in\spE$.
\end{thm}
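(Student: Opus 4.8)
The plan is to deduce the estimate from Theorem~\ref{thm:RTF-properties} by an elementary compactness argument on the finite-dimensional space $\spH$. By Theorem~\ref{thm:RTF-properties} together with the computation of the $\sigma$-lc-measure recalled right after it, $Q_\infty\colon F\mapsto\RTF|F|(0)[\sigma]=\int_{\lcc}\abs{F}_\omega^2\lcV$ is a finite positive-semidefinite Hermitian form on $\spH$ whose kernel equals $\spH[\sigma]$, while $Q_\ell\colon F\mapsto\RTF|F|(1)[\sigma]$ is a finite positive-definite Hermitian form on $\spH$ (finite since $\eps=1>0$; positive since the integrand is nonnegative and strictly positive off a nowhere dense set), the subscript recording that $Q_\ell$ depends on the normalisation through $\log\abs{\ell\psi}$. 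As $Q_\infty$ is independent of the normalisation and descends to a positive-definite inner product on $\spH/\spH[\sigma]$, I first fix once and for all a linear complement $V$ of $\spH[\sigma]$ in $\spH$. For $F$ in the $Q_\ell$-orthogonal complement $\spE$ of $\spH[\sigma]$, let $F_0\in V$ be the unique element with $F-F_0\in\spH[\sigma]$; then $Q_\ell(F_0)=Q_\ell(F)+Q_\ell(F_0-F)\ge Q_\ell(F)$ by Pythagoras (since $F\perp_{Q_\ell}\spH[\sigma]$), whereas $Q_\infty(F_0)=Q_\infty(F)$ because each depends only on $F\bmod\spH[\sigma]$. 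Hence it suffices to find $C$ so that the normalisation $\log\abs{\ell\psi}\ge C$ forces $Q_\ell\le Q_\infty$ on the \emph{fixed} finite-dimensional space $V$.

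Next I would show that $Q_\ell$ can be made arbitrarily small. It is enough to run the normalisation by letting $\ell\to\infty$ with $\psi$ (hence also $\vphi_L$, $\mtidlof{\vphi_L}$, $\spH$, $\spH[\sigma]$, $V$ and $Q_\infty$) held fixed: the alternative of adding a constant to $\psi$ merely precedes this by a shift $\psi\mapsto\psi-c$, $\vphi_L\mapsto\vphi_L+c$ that leaves $\vphi_L+\psi$ and — up to the same shift of the integrand — all the listed data unchanged, so we may assume it has already been made. If $\inf_X\abs\psi=0$ the normalisation $\log\abs{\ell\psi}\ge C$ is unattainable and there is nothing to prove, so assume $\abs\psi\ge m>0$ on $X$ and fix $\ell_0$ with $\log\abs{\ell_0\psi}\ge 1$ on $X$. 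For a fixed $F\in\spH$ the integrand of
\[
  \RTF|F|(1)[\sigma]=\int_X\frac{\abs{F}^2 e^{-\vphi_L-\psi}}{\abs\psi^\sigma\paren{\log\ell+\log\abs\psi}^2}
\]
(in which $\log\abs{\ell\psi}=\log\ell+\log\abs\psi$ manifestly grows with $\ell$) is, for $\ell\ge\ell_0$, pointwise non-increasing in $\ell$ and tends pointwise to $0$ as $\ell\to\infty$; dominating it by its value at $\ell=\ell_0$, which is integrable by Theorem~\ref{thm:RTF-properties}, dominated convergence gives $Q_\ell(F)\searrow 0$.

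Finally I would invoke Dini's theorem. The set $\Sigma:=\setd{F_0\in V}{Q_\infty(F_0)=1}$ is compact since $Q_\infty$ is positive-definite on $V$, and the functions $F_0\mapsto Q_\ell(F_0)$ on $\Sigma$ are continuous, non-increasing in $\ell$ for $\ell\ge\ell_0$, and tend to $0$ pointwise by the previous step, hence tend to $0$ uniformly on $\Sigma$; so there is $\ell_1$ with $Q_\ell\le 1=Q_\infty$ on $\Sigma$ for all $\ell\ge\ell_1$, whence $Q_\ell\le Q_\infty$ on all of $V$ by homogeneity. Since $\abs\psi\ge m$ on $X$, the choice $C:=\log(m\ell_1)$ makes $\log\abs{\ell\psi}\ge C$ force $\ell\ge\ell_1$, and combining this with the first paragraph gives $\RTF|F|(1)[\sigma]\le\RTF|F|(0)[\sigma]$ for every $F\in\spE$, as required.

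I do not expect a genuine obstacle here — this is the ``easy'' estimate. The one point deserving care is to check that the data feeding into $C$ (the spaces $\spH$ and $\spH[\sigma]$, the complement $V$, the form $Q_\infty$, and the decay rate of $Q_\ell$) is controlled by $X$ and $\vphi_L+\psi$ and is unaffected by the constant-shift part of the normalisation, using that $\mtidlof{\vphi_L}$ and the $\sigma$-lc-measure (the latter a residue along the lc centres) are shift-invariant, so that $C$ may be taken to depend on $X$ and $\vphi_L+\psi$ alone. The estimate is ``non-universal'' precisely because $\ell_1$, hence $C$, is produced by a compactness argument on the $X$-dependent spaces $\spH$ and $V$, and so comes with no control as $X$ varies; removing that dependence is exactly Conjecture~\ref{conj:estimates-on-sigma-lcc}, which this method does not reach.
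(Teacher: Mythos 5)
Your proposal is correct, and at its core it is the paper's own argument for Theorem \ref{thm:non-universal-estimate}: finite-dimensionality of $\spH$, invariance of the residue norm $\RTF|\cdot|(0)[\sigma]$ under the normalisation (Corollary \ref{cor:RTF-at-0}), and decay of $\RTF|\cdot|(1)[\sigma]$ as $\log\abs{\ell\psi}$ is pushed up, assembled by a compactness argument. The differences are in implementation and are mildly in your favour: the paper fixes a basis of $\spE$ orthonormal for $\RTF<\cdot,\cdot>(0)$ and lets the Gram matrix of $\RTF<\cdot,\cdot>(1)$ drop below the identity, which quietly ignores that $\spE$ itself is defined through the very norm being renormalised, whereas your fixed complement $V$ together with the Pythagoras/Cauchy--Schwarz transfer from $V$ to $\spE$ removes that circularity; your dominated-convergence-plus-Dini step is just an explicit version of the uniformity the paper extracts from the finiteness of the basis. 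One caveat applies to both proofs equally: your closing claim that $\log\abs{\ell\psi}\geq C:=\log(m\ell_1)$ forces $\ell\geq\ell_1$ is only valid when the normalisation is achieved by enlarging $\ell$ with $\psi$ fixed; if it is achieved by subtracting a large constant from $\psi$, your ``assume the shift has already been made'' reduction makes $m$ and the Dini threshold $\ell_1$ depend on the shift, so a single $C=C(X,\vphi_L+\psi)$ covering the full two-parameter normalisation would still need, e.g., monotonicity of the integrand in the shift together with a separate estimate in the small-$\ell$/large-shift regime. The paper's proof is no more precise on this point (it, too, only argues the two one-parameter limits), so this is a shared looseness rather than a defect of your route relative to the paper's.
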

Note that the proof of the above ``non-universal'' estimate does not
make use of the psh assumption on $\vphi_L +\paren{1+\beta}\psi$.
It indeed follows from the compactness of $X$ and the invariant
property of the residue norm $\RTF|\cdot|(0)[\sigma]$.

Now it makes sense to talk of the ``minimal normalisation'' on
$\log\abs{\ell\psi}$ with respect to $(\vphi_L,\psi)$, i.e.~there is a
minimal normalising constant $C_{\min} = C_{\min}(\vphi_L,\psi) >0$
such that under the normalisation $\log\abs{\ell\psi} \geq C_{\min}$
on $X$, the inequality $\RTF|F|(1)[\sigma] \leq \RTF|F|(0)[\sigma]$
holds true for all $F \in \spE$.
Conjecture \ref{conj:estimates-on-sigma-lcc} claims that the constant
$b :=\sup C_{\min}(\vphi_L,\psi)$, where the supremum is taken over all
possible $\vphi_L$ and $\psi$ with suitable curvature assumption on
$X$, should be finite.

The claim on the normalising constant $b$ being no less than $1$ in
the conjecture is verified in the explicit examples with $X =
\proj^3$ and $K_{\proj^3} \otimes L$ isomorphic to $\holo$ or
$\holo(1)$ given in Section \ref{sec:examples}.
They indeed satisfy the stronger inequality
\begin{equation*}
  \RTF|F|(\eps)[\sigma] \leq \RTF|F|(0)[\sigma] \quad\text{ for all }
  \eps \geq 0 
\end{equation*}
for every $F \in \spE$, under the normalisation $\log\abs{\ell\psi}
\geq 1$.
Example \ref{eg:curvature-assumption-not-hold} even provides an
instance that the $L^2$ estimates hold true even when the usual
curvature assumption, i.e.~$\vphi_L+\paren{1+\beta}\psi$ being psh for
$\beta \in [0,\delta]$ with $\delta > 0$, is not satisfied. 
The computation in the examples may hopefully provides hints, as well
as difficulties, on proving Conjecture
\ref{conj:estimates-on-sigma-lcc}.

The rest of Section \ref{sec:intro} provides the notation and the
basic setup used in this article. 
Discussion on the properties of $\RTF|\cdot|(\eps)[\sigma]$ starts
from Section \ref{sec:property-of-RTF}.
Comparison of the (squared) norm $\RTF|\cdot|(\eps)[\sigma]$ with
the sup-norm and $L^{\frac 2p}$ norms with smooth weights for $p > 1$
is also provided in addition to the proof of Theorem
\ref{thm:RTF-properties}.  
Section \ref{sec:L2-estimates} provides the proof of the
``non-universal'' estimates in Theorem
\ref{thm:non-universal-estimate_intro} and detailed computations on the
particular examples on $\proj^3$.



\subsection{Notation}


In this paper, the following notations are used throughout.

\begin{notation}
  Set $\ibar := \ibardefn \;$.\ibarfootnote
\end{notation}

\begin{notation}
  Each potential $\vphi$ (of the curvature of
  a metric) on a holomorphic line bundle $L$ in the following
  represents a collection of local functions
  $\set{\vphi_\gamma}_\gamma$ with respect to some fixed local
  coordinates and trivialisation of $L$ on each open set $V_\gamma$ in
  a fixed open cover $\set{V_\gamma}_\gamma$ of $X$.  The functions
  are related by the rule
  $\vphi_\gamma = \vphi_{\gamma'} + 2\Re h_{\gamma \gamma'}$ on
  $V_\gamma \cap V_{\gamma'}$ where $e^{h_{\gamma \gamma'}}$ is a
  (holomorphic) transition function of $L$ on
  $V_\gamma \cap V_{\gamma'}$ (such that
  $s_\gamma = s_{\gamma'}e^{h_{\gamma \gamma'}}$, where $s_\gamma$ and
  $s_{\gamma'}$ are the local representatives of a section $s$ of $L$
  under the trivialisations on $V_\gamma$ and $V_{\gamma'}$
  respectively).
  Inequalities between potentials is meant to be the inequalities
  under the chosen trivialisations over open sets in the fixed open
  cover $\set{V_\gamma}_\gamma$.
\end{notation}

\begin{notation} \label{notation:potentials}
  For any prime (Cartier) divisor $E$, let
  \begin{itemize}
  \item $\phi_E := \log\abs{s_E}^2$, representing the collection
    $\set{\log\abs{s_{E,\gamma}}^2}_{\gamma}$, denote a potential (of
    the curvature of the metric) on the line bundle associated to $E$
    given by the collection of local representations
    $\set{s_{E,\gamma}}_{\gamma}$ of some canonical section $s_E$
    (thus $\phi_E$ is uniquely defined up to an additive constant);
    
  \item $\sm\vphi_E$ denote a smooth potential on the line
    bundle associated to $E$;
    
    
  \item $\psi_E := \phi_E - \sm\vphi_E$, which is a global function
    on $X$, when both $\phi_E$ and $\sm\vphi_E$ are fixed.
  \end{itemize}
  All the above definitions are extended to any $\fieldR$-divisor $E$
  by linearity.
  For notational convenience, the notations for a $\fieldR$-divisor
  and its associated $\fieldR$-line bundle are used interchangeably.
\end{notation}

\begin{notation} \label{notation:norm-of-n0-form}
  For any $(n,0)$-form (or $K_X$-valued section) $f$, define $\abs f^2
  := c_n f \wedge \conj f$, where $c_n :=
  (-1)^{\frac{n(n-1)}{2}}\paren{\pi\ibar}^n$.
  For any \textde{Kähler} metric $\omega =\pi\ibar \sum_{1\leq j,k\leq
    n} h_{j\conj k} \:dz^j \wedge d\conj{z^k}$ on $X$, set $d\vol_{X,\omega} :=
  \frac{\omega^{\wedge n}}{n!}$.
  Set also $\abs f_\omega^2 d\vol_{X,\omega} = \abs f^2$.
\end{notation}

\begin{notation}
  For any two non-negative functions $u$ and $v$,
  write $u \lesssim v$ (equivalently, $v \gtrsim u$) to mean that there
  exists some constant $C > 0$ such that $u \leq C v$, and $u
  \sim v$ to mean that both $u \lesssim v$ and $u \gtrsim v$ hold
  true.
  For any functions $\eta$ and $\phi$, write $\eta \lesssim_\tlog \phi$
  if $e^\eta \lesssim e^\phi$.
  Define $\gtrsim_\tlog$ and $\sim_\tlog$ accordingly.
\end{notation}


\subsection{Basic setup}
\label{sec:setup}


The same setup as in \cite{Chan&Choi_ext-with-lcv-codim-1}*{\S 1.3} is
considered in this paper.

Let $(X,\omega)$ be a \emph{compact} \textde{Kähler} manifold of
complex dimension $n$, and let $\mtidlof{\vphi} := \mtidlof[X]{\vphi}$
be the multiplier ideal sheaf of the potential $\vphi$ on $X$ given at
each $x \in X$ by
\begin{equation*}
  \mtidlof{\vphi}_x := \mtidlof[X]{\vphi}_x
  :=\setd{f \in \holo_{X,x}}{
    \begin{aligned}
      &f \text{ is defined on a coord.~neighbourhood } V_x \ni x \vphantom{f^{f^f}} \\
      &\text{and }\int_{V_x} \abs f^2 e^{-\vphi} d\lambda_{V_x} < +\infty
    \end{aligned}
  } \; ,
\end{equation*}
where $d\lambda_{V_x}$ is the Lebesgue measure on $V_x$.
Throughout this paper, the following are assumed on $X$:

\begin{enumerate}[itemsep=8pt]

\item \label{item:diff-of-q-psh}
  $(L, e^{-\vphi_L})$ is a hermitian line bundle with an
  analytically singular metric $e^{-\vphi_L}$, where
  $\vphi_L$ is locally equal
  to $\vphi_1 - \vphi_2$, where each of the
  $\vphi_i$'s is a quasi-psh local function with \emph{neat analytic
  singularities}, i.e.~locally
  \begin{equation*}
    \vphi_i \equiv c_i \log\paren{\sum_{j=1}^N \abs{g_{ij}}^2} \mod
    \smooth \; ,
  \end{equation*}
  where $c_i \in \fieldR_{\geq 0}$ and $g_{ij} \in \holo_X \;$;

\item 
  $\psi$ is a global function on $X$ such that it can also be
  expressed locally as a difference of two quasi-psh functions with
  neat analytic singularities; 

\item \label{item:psi-bounded}
  $\psi < 0$ on $X$ (which implies that $\psi$ is quasi-psh after
  some blow-ups as it has only neat analytic singularities);

\item
  $\vphi_L + (1+\beta) \psi$ is a plurisubharmonic (psh) potential for
  all $\beta \in [0, \delta]$ for some $\delta > 0$;

\item 
  $1$ is a jumping number of the family $\set{\mtidlof{\vphi_L
      + m {\psi}}}_{m \in \fieldR_{\geq 0}} \;$ such that
  \begin{equation*}
    \mtidlof{\vphi_L}
    = \mtidlof{\vphi_L + m {\psi}}
    \supsetneq \mtidlof{\vphi_L + \psi}
    \quad \text{ for all }m \in [ 0 ,  1) 
  \end{equation*}
  (the jumping numbers exist on compact $X$ by the openness property
  of multiplier ideal sheaves as $\psi$ is quasi-psh after suitable
  blow-ups);
  

\item $S \subset \psi^{-1}\paren{-\infty}$ is a \emph{reduced}
  subvariety defined by the annihilator
  \begin{equation*}
    \defidlof{S} := \Ann_{\holo_X} \paren{ \dfrac{\multidl\paren{\vphi_L}}
      {\multidl\paren{\vphi_L +  \psi}} } 
  \end{equation*}
  (see \cite{Demailly_extension}*{Lemma 4.2} for the proof that
  $\defidlof{S}$ is reduced).
\end{enumerate}

When it helps in the computation, one can make the following assumption.
\begin{SNCassumption}[see \cite{Chan&Choi_ext-with-lcv-codim-1}*{\S 2.1}
  for details] \label{assumption:snc}
  By considering a suitable log-resolution of $(X,\vphi_L,\psi)$, one
  can assume that 
  \begin{itemize}
  \item $S$ is a reduced \emph{divisor}, and
  \item the polar ideal sheaves of $\vphi_L$ and $\psi$ respectively
    are principal and the corresponding divisors have only
    \emph{simple normal crossings (snc)} with each other.
  \end{itemize}
\end{SNCassumption}
Under such assumption, one may define $\wphi_L$ by
\begin{equation*}
  \wphi_L + \psi_S := \vphi_L +\psi  \; ,
\end{equation*}
where $\psi_S := \phi_S - \sm\vphi_S < 0$ (see Notation
\ref{notation:potentials}), for convenience.

\begin{notation} \label{notation:extension-general}
  Given a set $V \subset X$, a section $f$ of
  $\frac{\mtidlof{\vphi_L}} {\mtidlof{\vphi_L +\psi}}$ on
  $V$ (which is supported in $S\cap V$), and a section $F$ of
  $\mtidlof{\vphi_L}$ on $V$, the notation
  \begin{equation*}
    F \equiv f \mod \mtidlof{\vphi_L +\psi} \quad\text{on }V
  \end{equation*}
  is set to mean that, for all $x \in V$, if $(F)_x$ and $(f)_x$
  denote the germs of $F$ and $f$ at $x$ respectively, one has
  \begin{equation*}
    \paren{(F)_x \bmod \mtidlof{\vphi_L +\psi}_x} = (f)_x \; .
  \end{equation*}
  If such a relation between $F$ and $f$ holds, $F$ is said to be an
  \emph{extension} of $f$ on $V$.
  If the set $V$ is not specified, it is assumed to be the whole space
  $X$.
  Such notation is also applied to cases with a slight variation of
  the sheaf $\mtidlof{\vphi_L +\psi}$ (for example, with
  $\mtidlof{\vphi_L +\psi}$ replaced by $\smooth_X \otimes
  \mtidlof{\vphi_L +\psi}$%
  ).
\end{notation}

\begin{definition}\label{def:union-of-sigma-lc-centres}
  If $S$ is a reduced divisor with snc on $X$, an \emph{lc centre of $(X,S)$
  of codimension $\sigma$ in $X$} is an irreducible component of any
  intersections of $\sigma$ irreducible components of $S$ in $X$ (see
  \cite{Kollar_Sing-of-MMP}*{Def.~4.15} for the general definition of lc
  centres when $S$ is a divisor).
  Define $\lc_X^\sigma (S)$ to be the \emph{union of all lc centres of
    $(X,S)$ of codimension $\sigma$ in $X$}.
  For a general reduced subvariety $S$ (which may not even be a divisor) in
  $X$ defined above, define $\lcc$ (or, more precisely,
  $\lcc(\vphi_L,\psi)$) as 
  \begin{equation*}
    \lcc := \pi\paren{\lcc<\rs X>(\smash[t]{\rs S})} \; ,
  \end{equation*}
  where $\pi \colon \rs X \to X$ is a log-resolution of
  $(X,\vphi_L,\psi)$ and $\rs S$ is the reduced divisor with snc
  described in \cite{Chan&Choi_ext-with-lcv-codim-1}*{\S 2.1} (which satisfies
  $\pi(\rs S) = S$).
  Moreover, an \emph{lc centre of $(X,S)$ \emph{(or, more precisely, lc
      centre of $\paren{X, \frac{\mtidlof{\vphi_L}}
        {\mtidlof{\vphi_L+\psi}}}$ or $(X,\vphi_L, \psi)$)} of
    codimension $\sigma$} is meant to be the image under $\pi$ of an
  lc centre of $(\rs X, \rs S)$ of codimension $\sigma$ in $\rs X$.
\end{definition}

\begin{definition} \label{def:lc-measure}
  The \emph{lc-measure supported on the lc centres of $(X,S)$ of codimension
  $\sigma$} (or \emph{$\sigma$-lc-measure} for short) \emph{with respect to
  $f \in \spH/\spH[1]$}, denoted as $\abs f_\omega^2 \lcV$, is defined by 
  \begin{equation*} 
    \smooth_0\paren{S} \ni g \mapsto \int_{\lcc}
    g \abs f_\omega^2 \lcV
    := \lim_{\eps \tendsto 0^+} \eps \int_X \lift{g} \abs{\smash[t]{\lift{f}}}^2 
    \:\frac{e^{-\vphi_L -\psi}}{\abs{\psi}^{{\sigma} + \eps}} \; ,
  \end{equation*}
  where
  \begin{itemize}
  \item
    $\lift{f}$ is a smooth extension of $f$ to an $L$-valued
    $(n,0)$-form on $X$ such that $\lift{f} \in \smooth \otimes
    \mtidlof{\vphi_L} \cdot \defidlof{\lcc[\sigma+1]}$;
  
  \item $\lift{g}$ is any smooth extension of $g$ to a function on
    $X$.
  \end{itemize}
\end{definition}


\subsection{Technical preparation}
\label{sec:tech_prep}


Here are a specific open cover of $X$ and an inequality which are
referred to frequently in the proofs.

\subsubsection{Admissible open covers}
\label{sec:admissible-covers}

Under the snc assumption \ref{assumption:snc} on $\vphi_L$ and $\psi$,
let $\set{V_\gamma}_{\gamma \in I}$ and $\set{V_\gamma'}_{\gamma
  \in I}$ be finite open covers of $X$ such that
\begin{itemize}
\item each $V_\gamma'$ lies in some (fixed) coordinate chart of $X$ on which
  $L$ is trivialised;
\item $V_\gamma \Subset V_\gamma' \subset X$ for each $\gamma \in I$ where
  $V_\gamma =\Delta^n(0;1)$ and $V_\gamma' =\Delta^n(0;2)$ are concentric
  polydiscs centred at the origin with polyradii $1$ and $2$
  respectively in the coordinate system on $V_\gamma'$;
\item if the polar sets $P_{\vphi_L}$ and $P_\psi$ of respectively
  $\vphi_L$ and $\psi$ have non-empty intersection with $V_\gamma'$, each
  irreducible component of $\paren{P_{\vphi_L} \cup P_\psi} \cap V_\gamma'$
  must lie inside a coordinate plane and pass through the origin.
\end{itemize}

\subsubsection{$x \log x$-inequality}

It can be shown via calculus that
\begin{equation} \label{eq:xlogx-estimate}
  x^{\eps} \abs{\log x}^s \leq \frac{s^s}{e^s \eps^s}
\end{equation}
for all $x \in [0,1]$, $\eps > 0$ and $s \geq 0$ (where $0^0$ is
treated as $1$).
Indeed, the function $x \mapsto x^{\eps} \abs{\log x}^s$ on $[0,1]$
has its unique maximum at $x=e^{-\frac s\eps}$.




\section{Properties of the residue function $\RTF$}
\label{sec:property-of-RTF}

In this section, the number $\sigma$ is fixed and
$\RTF|\cdot|(\eps)[\sigma]$ is written as $\RTF|\cdot|(\eps)$.
Moreover, the snc assumption \ref{assumption:snc} is assumed by
passing to a log-resolution of $(X,\vphi_L, \psi)$ if necessary.

\subsection{Comparison with sup-norm and $L^{\frac 2p}$ norm with
  smooth weight}
\label{sec:compare-sup-norm}



Under the snc assumption \ref{assumption:snc}, let $V_\gamma
\Subset V_\gamma'$ be members of the admissible open covers of $X$
given in Section \ref{sec:admissible-covers}.
Recall that $\vphi_L+\psi$ is psh and thus locally bounded from
above. 
Pick any smooth potential $\sm\vphi_L$ on $L$ and normalise it (by
adding a suitable constant) such that $\vphi_L -\sm\vphi_L +\psi \leq
0$ on $X$.
Then, for any $f \in \spH$ and for any $\eps > 0$, Cauchy's
integral formula for holomorphic functions infers that
\begin{align*} 
  \sup_{V_\gamma} \abs{f}^2
  \lesssim \paren{\int_{V_\gamma'} \abs{f} e^{-\frac 12 \sm\vphi_L}}^2
  &\leq \int_{V_\gamma'} \frac{\abs{f}^2 e^{-\sm\vphi_L}}{\abs\psi^\sigma
    \paren{\log\abs{\ell\psi}}^{1+\eps}} \:\cdot
    \int_{V_\gamma'} \abs\psi^\sigma
    \paren{\log\abs{\ell\psi}}^{1+\eps} d\vol_{X,\omega} \\
  &\leq \frac 1\eps \:\RTF|f|(\eps) \: \int_{V_\gamma'}
    \abs\psi^\sigma \paren{\log\abs{\ell\psi}}^{1+\eps} d\vol_{X,\omega} \; ,
\end{align*}
where the constant involved in $\lesssim$ depends only on
$\sm\vphi_L$, $V_\gamma$ and $V_\gamma'$.
Notice that the last integral on the right-hand-side is convergent for
any $V_\gamma'$ since $\psi$ has at worst logarithmic poles along
coordinate planes.
This shows that convergence in $\RTF|\cdot|(\eps)$ for any $\eps > 0$
implies locally uniform convergence.

Similarly, the squared norm $\RTF|\cdot|(\eps)$ for $\eps >0$ can also
be compared with $L^{\frac 2p}$ norms for $p > 1$, under the psh
assumption on (or, more precisely, the local-upper-boundedness of)
$\vphi_L+\psi$.
By invoking \textde{Hölder's} inequality, one obtains, for
any $f \in \spH$, any $\eps >0$ and any number $p > 1$,
\begin{align*}
  \int_X \abs f^{\frac 2p} e^{-\frac 1p \sm\vphi_L}
  &\leq \paren{\int_X \frac{\abs f^{2} e^{-\sm\vphi_L}}
    {\abs\psi^\sigma \paren{\log\abs{\ell\psi}}^{1+\eps}} }^{\frac 1p}
  \cdot \paren{\int_X \abs\psi^{\frac qp \sigma}
    \paren{\log\abs{\ell\psi}}^{\frac qp \paren{1+\eps}} d\vol_{X,\omega}}^{\frac 1q}
  \\
  &\leq \paren{\frac 1\eps \:\RTF|f|(\eps)}^{\frac 1p} \cdot
    \paren{\int_X \abs\psi^{\frac\sigma{p-1}}
    \paren{\log\abs{\ell\psi}}^{\frac{1+\eps}{p-1}} d\vol_{X,\omega}}^{\frac 1q}
    \; ,
\end{align*}
where $\frac 1p + \frac 1q =1$, and the last integral on the
right-hand-side is convergent for the same reason as before together
with the assumption that $X$ being compact.
Therefore, convergence in $\RTF|\cdot|(\eps)$ for any $\eps >0$
implies convergence in $L^{\frac 2p}$ norm for $p > 1$.

If one insists in comparing $\RTF|\cdot|(\eps)$ with an $L^2$ norm
with smooth weight (with a somewhat more controllable constant in the
estimate instead of the one in the comparison with the sup-norm),
an extra assumption, namely, $\vphi_L$ (or at least
$\vphi_L +\alpha \psi$ for some $\alpha \in (0,1)$) being locally
bounded from above, is needed.
In that case, since
\begin{align*}
  e^{\psi} \abs\psi^\sigma \paren{\log\abs{\ell\psi}}^{1+\eps}
  &=e^{-\abs\psi} \abs\psi^{\sigma+1} \cdot \frac\ell{\abs{\ell\psi}}
  \paren{\log\abs{\ell\psi}}^{1+\eps} \\
  &\overset{\mathclap{\text{by \eqref{eq:xlogx-estimate}}}}\leq \quad\;\;
  \paren{\frac{\sigma +1}e}^{\sigma+1} \ell
  \paren{\frac{1+\eps}e}^{1+\eps} =: C \; ,
\end{align*}
it follows that, with the normalisation $\vphi_L -\sm\vphi_L \leq 0$
on $X$, one has
\begin{equation*}
  \int_X \abs f^2 e^{-\sm\vphi_L}
  \leq \int_X \abs f^2 e^{-\vphi_L}
  \leq C \int_X \frac{\abs f^{2} e^{-\vphi_L-\psi}}
  {\abs\psi^\sigma \paren{\log\abs{\ell\psi}}^{1+\eps}}
  =\frac C{\eps} \:\RTF|f|(\eps) \; .
\end{equation*}
This is how it is done in \cite{DHP} to obtain \cite{DHP}*{Thm.~4.1,
  eq.~(24)}, which requires the extra assumption on $\vphi_L$.
In \cite{Chan&Choi_ext-with-lcv-codim-1}*{\S 4} (mainly
\cite{Chan&Choi_ext-with-lcv-codim-1}*{Lemma 4.4.1}), the extra
assumption on $\vphi_L$ is removed using an argument essentially the
same as the comparison between $\RTF|\cdot|(\eps)$ and an $L^1$ norm
given above.


\subsection{Identity of $\RTF$ via integration by parts}
\label{sec:identity-of-RTF}



Recall that, under the snc assumption \ref{assumption:snc}, the
potential $\wphi_L$ is defined by
\begin{equation*}
  \wphi_L + \psi_S := \vphi_L +\psi  \; ,
\end{equation*}
where $\psi_S := \phi_S - \sm\vphi_S < 0$ (see Notation
\ref{notation:potentials} for the meaning of $\phi_S$ and
$\sm\vphi_S$).

Let $\set{\rho_\gamma}_{\gamma \in I}$ be a smooth partition of unity
subordinated to the admissible open cover $\set{V_\gamma}_{\gamma \in
  I}$ defined in Section \ref{sec:admissible-covers}.
On an open set $V_\gamma$ with $S \cap V_\gamma \neq
\emptyset$, let $z_1, \dots, z_n$ be the holomorphic 
coordinates such that
\begin{equation*}
  S \cap V_\gamma = \set{z_1 \dotsm z_{j_S} = 0}
  \quad\text{ and }\quad
  \phi_S = \sum_{j=1}^{j_S} \log\abs{z_j}^2
\end{equation*}
for some integer $j_S \in [1, n]$.
Then, it follows from the snc assumption that $\psi$ and $\wphi_L$ can
be written on $V_\gamma$ as
\begin{equation} \label{eq:psi-wphi-local-expression}
  \res\psi_{V_\gamma} =\sum_{j=1}^n \nu_j \log\abs{z_j}^2 +\alpha
  \quad\text{ and }\quad
  \res{\wphi_L}_{V_\gamma} =\sum_{j=1}^n c_j \log\abs{z_j}^2 +\beta \; ,
\end{equation}
where $\nu_j$'s and $c_j$'s are non-negative numbers and $\alpha$ and
$\beta$ are smooth functions on $V_\gamma$.
By passing to a refinement of the covering $\set{V_\gamma}_{\gamma\in
  I}$ if necessary, assume without loss of generality that
$\sup_{V_\gamma} \frac{r_j}{2\nu_j} \frac{\diff}{\diff r_j} \alpha >
-1$ for $j = 1, \dots, j_S$, where $r_j$'s are the radial components
of the polar coordinate $(r_j, \theta_j)$ such that $z_j = r_j
e^{\cplxi \:\theta_j}$.

For any integer $\sigma' \in [1,j_S]$, one has
\begin{equation*} 
  \lcc[\sigma'] \cap V_\gamma
  =\quad\;\; \bigcup_{\mathclap{\substack{p \in \symmgp_{j_S} /
        \paren{\symmgp_{\sigma'} \times \symmgp_{r}} \\ r :=j_S-\sigma'}}} \;\;
  \set{z_{p(1)} =z_{p(2)} =\dots =z_{p(\sigma')} =0}
  =: \bigcup_p \lcc[\sigma']^p_\gamma \; ,
\end{equation*}
where $p$ is a choice of $\sigma'$ elements from the set
$\set{1,2, \dots, j_S}$ and is abused to mean a corresponding
permutation.
Using the above notations, for any $f \in \spH$, after the
cancellation between the poles of $e^{-\wphi_L}$ with the zeros
of $\abs f^2$, one obtains
\begin{equation*}
  \rho_\gamma \abs f^2 e^{-\wphi_L}
  = \rho_\gamma \abs{\smash[t]{\widetilde f}}^2 e^{-\beta} \:
  \bigwedge_{j=1}^{j_S} \paren{\pi \ibar\: dz_{j} \wedge
    d\conj{z_{j}}} \wedge
  \bigwedge_{\mathclap{k=j_S+1}}^{n} \frac{\pi \ibar\: dz_k \wedge
    d\conj{z_k}} {\smash{\underbrace{\abs{z_k}^{2\ell_k}}_{(\ell_k < 1)}}} 
\end{equation*}
for some holomorphic function $\widetilde f \in \defidlof{\lcc}$ on
$V_\gamma$ and, furthermore,
\begin{equation} \label{eq:pt-norm-in-coordinates}
  \rho_\gamma \abs f^2 e^{-\wphi_L-\psi_S}
  =
  \quad \; \sum_{\mathclap{\substack{p, p' \in \symmgp_{j_S} /
        \paren{\symmgp_{\sigma'} \times \symmgp_{r}} \\ r
        :=j_S-\sigma'}}} \quad
  \overbrace{\rho_\gamma \: \widetilde f_p \conj{\widetilde f_{p'}}
    \:e^{-\beta +\sm\vphi_S}}^{F_{p,p'} \::=}
  \:\frac{\bigwedge_{j=1}^{j_S} \paren{\pi \ibar\: dz_{j} \wedge
      d\conj{z_{j}}}} {\prod_{j=1}^{\sigma'} z_{p(j)}
    \conj{z_{p'(j)}}} 
  \wedge
  \bigwedge_{\mathclap{k=j_S+1}}^{n} \frac{\pi \ibar\: dz_k \wedge
    d\conj{z_k}} {\abs{z_k}^{2\ell_k}} \; ,
\end{equation}
where $\sigma'$ is either $\sigma$ when $\lcc \cap V_\gamma
\neq\emptyset$ or $j_S$ when $\lcc \cap V_\gamma
=\emptyset$ (thus $j_S < \sigma$, and $\lcc[\sigma']
\cap V_\gamma \neq \emptyset$ but $\lcc[\sigma'+1] \cap V_\gamma
=\emptyset$),
and $\widetilde f_p$'s and $\widetilde f_{p'}$'s are
some holomorphic functions on $V_\gamma$ which thus infer that
$F_{p,p'}$'s are smooth.
The numbers $\ell_k$ are $< 1$ and can possibly be zero or negative.
This shows explicitly the fact that $\rho_\gamma \abs f^2
e^{-\wphi_L-\psi_S}$ has lc singularities along (some of) the
irreducible components of $S \cap V_\gamma$, has at worst klt
singularities along other coordinate planes and is smooth elsewhere.

Let $\sym^s\paren{x_1,x_2,\dots,x_\sigma}$ be the elementary symmetric
polynomial of degree $s$ in $\sigma$ variables.
Define
\begin{equation*}
  \sym^s_{\sigma-1} := \sym^s\paren{1,\frac 12, \dots, \frac 1{\sigma -1}}
\end{equation*}
for convenience.

\begin{prop}[Theorem \ref{thm:RTF-properties}] \label{prop:RTF-int-by-parts-formula}
  For any $\eps > 0$, any $f \in \spH$ and on any
  $V_\gamma$ such that $\lcc \cap V_\gamma \neq \emptyset$, the
  integral $\fRTF$ converges and one has
  \begin{equation*}
    \fRTF
    +\eps \sum_{s=1}^{\sigma-1} \prod_{k=1}^{s-1} \paren{k+\eps}
    \cdot \sym^s_{\sigma-1} \cdot \fRTF(s+\eps)
    = \frac{\paren{-1}^\sigma}{\paren{\sigma -1}!} \int_X
    \frac{G_\sigma}{\paren{\log\abs{\ell\psi}}^\eps}  \; ,
  \end{equation*}
  where $G_\sigma$ is an $(n,n)$-form with at worst klt singularities
  along the coordinate planes in $V_\gamma$ and being smooth elsewhere.
  The coefficients of $G_\sigma$ contain derivatives of
  $\rho_\gamma\abs f^2$ (of order at most $\sigma$) in the normal directions of
  the irreducible components of $S$.
  Notice also that the usual conventions $\sum_{s=1}^0 = 0$ and
  $\prod_{k=1}^0 = 1$ are used here.
\end{prop}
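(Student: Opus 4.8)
The plan is to work locally on a fixed $V_\gamma$ meeting $\lcc$, using the coordinate description \eqref{eq:pt-norm-in-coordinates}, and to derive the identity from an iterated integration by parts in the radial variables $r_1,\dots,r_{\sigma'}$ of the coordinates $z_j = r_j e^{\cplxi\theta_j}$ cutting out the lc centre. First I would reduce to a single term $\lcc[\sigma]^p_\gamma$ in the sum, so by a relabelling of indices one may assume the relevant plane is $\set{z_1 = \dots = z_\sigma = 0}$; the diagonal terms $p = p'$ are the ones that survive in the limit, and the off-diagonal terms either vanish upon integrating $d\theta_j$ or contribute lower-order boundary pieces absorbed into $G_\sigma$. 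On such a term the integrand of $\fRTF$ is, up to the smooth factor $F_{p,p}$ and the harmless klt factors in the $z_k$, $k > j_S$, essentially
\begin{equation*}
  \eps \,\frac{\prod_{j=1}^{\sigma} (r_j\,dr_j\,d\theta_j)\,\cdots}{\prod_{j=1}^{\sigma} r_j^2 \cdot \abs\psi^\sigma (\log\abs{\ell\psi})^{1+\eps}}\; ,
\end{equation*}
and the key observation is that $\psi = \sum_j \nu_j \log r_j^2 + (\text{smooth})$, so that $\frac{\partial}{\partial r_j}\bigl(\tfrac{1}{\abs\psi^{\,\sigma-1}}\bigr)$ reproduces a factor $\tfrac{1}{r_j \abs\psi^{\sigma}}$ with a combinatorial constant, while a further derivative hitting $(\log\abs{\ell\psi})^{-\eps}$ produces the $(\log\abs{\ell\psi})^{-1-\eps}$ weight that defines $\fRTF$ again (with a factor $\eps$), and hitting $\abs\psi^{-(\sigma-1)}$ again raises the power of $\abs\psi$ in the denominator but with an extra $\log$ — this is what forces the appearance of the whole family $\fRTF(s+\eps)$ for $s = 1,\dots,\sigma-1$ rather than just $\fRTF$.

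The combinatorics is then the heart of the matter. I would set up the integration by parts as $\sigma - 1$ successive steps (one can integrate by parts in, say, $r_1$, then in $r_2$, and so on, or more efficiently treat the product $\prod (r_j \partial_{r_j})$ acting on $\abs\psi^{-(\sigma-1)}$ at once), and track the coefficients. At each step one picks up either a factor of the form $(k+\eps)$ from differentiating a power $\abs\psi^{-(k-1)+(\text{shift})}$ together with a $\tfrac{\partial\psi}{\partial r_j}$, which is $\tfrac{2\nu_j}{r_j}$ up to the derivative of the smooth part $\alpha$ — and here the hypothesis $\sup_{V_\gamma}\tfrac{r_j}{2\nu_j}\tfrac{\diff}{\diff r_j}\alpha > -1$ guarantees that $\tfrac{\partial\psi}{\partial r_j}$ does not change sign and the relevant boundary/convergence estimates go through — or a factor $\eps$ from the $\log$ term. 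Keeping careful account and symmetrising over which of the $\sigma$ radial variables is differentiated at which stage produces exactly the elementary symmetric polynomials $\sym^s_{\sigma-1} = \sym^s(1,\tfrac12,\dots,\tfrac1{\sigma-1})$ (the reciprocals of the integers $1,\dots,\sigma-1$ entering as the "weights" $\tfrac1k$ of the $k$-th integration by parts, since each integration in $r_j$ over $[0,1]$ of $r_j^{-1}\cdot(\text{power of }\log)$-type integrands contributes a $\tfrac1k$), together with the ascending products $\prod_{k=1}^{s-1}(k+\eps)$. The overall normalisation $\tfrac{(-1)^\sigma}{(\sigma-1)!}$ comes from the $(\sigma-1)$-fold $\int_0^1 \tfrac{dr}{r}(\cdot)$ pattern and the sign accumulated from each integration by parts.

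Convergence of $\fRTF$ for $\eps > 0$ I would get as in Section~\ref{sec:compare-sup-norm} (the $x\log x$-inequality \eqref{eq:xlogx-estimate} controls $\abs\psi^\sigma(\log\abs{\ell\psi})^{1+\eps}$ against $e^{-c\abs\psi}$, and $e^{-\vphi_L-\psi}=e^{-\wphi_L-\psi_S}$ has at worst lc singularities which are integrable against a positive power of $\abs\psi$), and the vanishing of all boundary terms on $\partial V_\gamma'$ requires nothing since we work with the partition-of-unity factor $\rho_\gamma$ supported in $V_\gamma \Subset V_\gamma'$; the only boundary locus is $r_j = 0$, where the integrand of each intermediate expression is $O(r_j^{-1+\delta})$ for some $\delta > 0$ (again by the sign control on $\tfrac{\partial\psi}{\partial r_j}$ and the klt nature of the remaining factors), so those boundary contributions vanish. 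I expect the main obstacle to be precisely the bookkeeping that shows the coefficients organise into $\eps\sum_{s=1}^{\sigma-1}\prod_{k=1}^{s-1}(k+\eps)\,\sym^s_{\sigma-1}$ — one has to perform the $(\sigma-1)$ integrations by parts in a way that makes the symmetric-function structure manifest (e.g.\ by a generating-function identity $\prod_{k=1}^{\sigma-1}(1+\tfrac{t}{k}) = \sum_s \sym^s_{\sigma-1}\,t^s$ applied with $t$ encoding the $\eps$-shifts), rather than by brute-force expansion — and to correctly identify everything not of the form $\fRTF(\cdot)$ as a single $(n,n)$-form $G_\sigma$ with at worst klt singularities whose coefficients involve only normal derivatives of $\rho_\gamma\abs f^2$ of order $\le \sigma$.
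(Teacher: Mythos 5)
Your overall route is the same as the paper's (decompose $\rho_\gamma\abs f^2 e^{-\wphi_L-\psi_S}$ as in \eqref{eq:pt-norm-in-coordinates}, integrate by parts in the radial variables, organise the coefficients into $\sym^s_{\sigma-1}$), but your treatment of the off-diagonal summands $p\neq p'$ is a step that would fail as written. The factor $F_{p,p'}=\rho_\gamma\,\widetilde f_p\,\conj{\widetilde f_{p'}}\,e^{-\beta+\sm\vphi_S}$ is not independent of the angular variables (it contains the cut-off and the holomorphic functions), so these summands do \emph{not} vanish upon integrating $d\theta_j$ for $\eps>0$; they vanish only at $\eps=0$, which is the content of Corollary \ref{cor:RTF-at-0}, and you are importing that limit behaviour into the fixed-$\eps$ identity. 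Nor can they simply be ``absorbed into $G_\sigma$'': untouched, they still carry the weight $\abs\psi^{-\sigma}\paren{\log\abs{\ell\psi}}^{-1-\eps}$ and lc-type poles along the planes $\set{z_{p(j)}=0}$, which is incompatible both with $G_\sigma$ having only klt singularities and with the weight $\paren{\log\abs{\ell\psi}}^{-\eps}$ on the right-hand side. The paper's fix is to rewrite $\frac1{z_{p(\sigma)}\conj{z_{p'(\sigma)}}}=\frac{\conj{z_{p(\sigma)}}}{\abs{z_{p(\sigma)}}^2}\cdot\frac1{\conj{z_{p'(\sigma)}}}$, take $\conj{z_{p(\sigma)}}F_{p,p'}$ as the new smooth factor, put $1/\conj{z_{p'(\sigma)}}$ into the klt block, and run the identical induction; in particular these terms contribute to the $\fRTF(s+\eps)$'s on the left, not merely to $G_\sigma$.

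Second, the quantitative heart of the proposition --- the coefficients $\eps\prod_{k=1}^{s-1}(k+\eps)\,\sym^s_{\sigma-1}$ together with the convergence claim --- is left by you as ``bookkeeping'' expected to be the main obstacle, so it is not actually established. The paper needs no generating-function symmetrisation: a single integration by parts in $r_1$ yields the recursion $\fRTFs_\sigma=-\frac{\eps}{\sigma-1}\,\fRTFs(1+\eps)_\sigma-\frac1{\nu_1\paren{\sigma-1}}\,\fRTFs[1]_{\sigma-1}$, and induction on $\sigma$ combined with the Pascal-type identity $\sym^s_{\sigma-1}=\sym^s_{\sigma-2}+\frac1{\sigma-1}\sym^{s-1}_{\sigma-2}$ gives the stated identity; the same induction delivers convergence for free, since $\fRTFs_\sigma$ and $\fRTFs(1+\eps)_\sigma$ are non-negative and the $(\sigma-1)$-stage integral is finite by hypothesis. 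Your direct convergence sketch is also off: the lc singularities are \emph{not} integrable against the available power $\abs\psi^{-\sigma}$ alone (already $\int_0^{1/2}\frac{dr^2}{r^2\,\abs{\log r^2}}=+\infty$, and the corner with $\sigma$ lc directions against $\abs\psi^{-\sigma}$ is likewise divergent); it is precisely the factor $\paren{\log\abs{\ell\psi}}^{1+\eps}$ with $\eps>0$ that produces the (borderline) convergence, and \eqref{eq:xlogx-estimate}, as you invoke it, bounds the weight in the direction used in Section \ref{sec:compare-sup-norm}, not in the direction needed here. With the $p\neq p'$ terms handled as above and either the paper's induction or an honest borderline corner estimate, your outline does become a proof.
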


\begin{proof}
  By the linearity of integrals, it suffices to prove the statement for
  each summand in the decomposition of $\fRTF$ 
  according to \eqref{eq:pt-norm-in-coordinates}.
  Write $\fRTFs[p,p']_\sigma$ as the summand containing $F_{p,p'}$ in the
  decomposition of $\fRTF$,
  and thus
  \begin{equation*}
    \fRTF = \sum_{p, p'} \fRTFs[p,p']_\sigma \; .
  \end{equation*} 
  The main procedure in the proof is to get rid of the lc singularities by
  applying integration by parts.

  First the summands $\fRTFs[p,p]_\sigma$ (i.e.~$p = p'$) are considered.
  It suffices to consider only the term with $p =p' = \id$,
  the identity permutation, and write $F_0 := F_{\id,\id}$.
  Note that, when all variables but $r_j$ are fixed (where $z_j = r_j
  e^{\cplxi\:\theta_j}$), one has $d\abs\psi = -d\psi =
  -\nu_j \paren{1+\frac{r_j}{2\nu_j} \frac{\diff}{\diff r_j} \alpha}
  d\log r_j^{2}$, in which the right-hand-side is nowhere zero on
  $V_\gamma$ by assumption.
  Set
  \begin{equation} \label{eq:def-F_j-in-by-parts}
    F_j := \frac{\diff}{\diff r_j}
    \paren{\frac{F_{j-1}}{1+\frac{r_j}{2\nu_j} \frac{\diff}{\diff r_j}
      \alpha}}
  \end{equation}
  for $j =1,\dots, \sigma$.
  Making the variables with only klt singularities implicit in view of
  Fubini's theorem (i.e.~variables $z_{\sigma+1}, \dots, z_n$ are
  hidden), 
  the claim of this proposition is reduced to the (absolute)
  convergence of the integral $\fRTFs_\sigma$ and  the equality
  \begin{equation} \label{eq:induction-claim} 
    \fRTFs_\sigma +\eps \sum_{s=1}^{\sigma-1} \prod_{k=1}^{s-1} \paren{k+\eps}
    \cdot \sym^s_{\sigma-1} \cdot \fRTFs(s+\eps)_\sigma
    = \frac{\paren{-1}^\sigma}{\paren{\sigma -1}!}
    \int_{V_\gamma} \frac{F_\sigma}{\paren{\log\abs{\ell\psi}}^\eps}
    \:\prod_{j=1}^\sigma \frac{dr_j d\theta_j}{2 \nu_j} 
  \end{equation}
  for any smooth function $F_0$, any integer $\sigma \geq 1$ and
  any real number $\eps >0$.
  This is proved via an induction on $\sigma$ as follows.

  Recall that $V_\gamma = \Delta^n(0;1)$ in the coordinate system
  $(z_j)$.
  Then, when $\sigma >1$,
  \begin{align*}
    \fRTFs[0]_{\sigma}
    &=\eps \int_X \frac{F_0}{\abs\psi^\sigma
      \paren{\log\abs{\ell\psi}}^{1+\eps}}
      \:\bigwedge_{j=1}^\sigma\frac{\pi\ibar\:dz_{j} \wedge d\conj{z_{j}}}
      {\abs{z_{j}}^2} \\
    &=\eps \int_{V_\gamma} \frac{F_0}{\abs\psi^\sigma
      \paren{\log\abs{\ell\psi}}^{1+\eps}}
      \prod_{j=1}^\sigma d\log r_j^2 \cdot
      \prod_{j=1}^\sigma \frac{d\theta_j}{2} \\
    &=-\frac\eps{\nu_1} \int_{V_\gamma}
      \frac{F_0}{1+\frac{r_1}{2\nu_1} \frac{\diff}{\diff r_1}\alpha} \:
      \frac{d\abs\psi}{\abs\psi^\sigma 
      \paren{\log\abs{\ell\psi}}^{1+\eps}}
      \prod_{j=2}^\sigma d\log r_j^2
      \quad\;\;\paren{\scriptstyle\text{$\prod_{j=1}^\sigma
      \frac{d\theta_j}{2}$ is made implicit}} \\
    &=\frac\eps{\nu_1 \paren{\sigma -1}} \int_{V_\gamma}
      \frac{F_0}{1+\frac{r_1}{2\nu_1} \frac{\diff}{\diff r_1}\alpha} \:
      \frac{d\paren{\frac1{\abs\psi^{\sigma -1}}}}
      {\paren{\log\abs{\ell\psi}}^{1+\eps}}
      \prod_{j=2}^\sigma d\log r_j^2 \\
    &\overset{\mathclap{\text{int.~by parts}}}= \quad\;\;\;
      \begin{aligned}[t]
        &\frac{\eps\paren{1+\eps}}{\nu_1 \paren{\sigma -1}}
        \int_{V_\gamma} \frac{F_0}{1+\frac{r_1}{2\nu_1}
          \frac{\diff}{\diff r_1}\alpha} \: \frac{d\abs\psi}
        {\abs\psi^{\sigma}\paren{\log\abs{\ell\psi}}^{2+\eps}}
        \prod_{j=2}^\sigma d\log r_j^2 \\
        &-\frac\eps{\nu_1 \paren{\sigma -1}} \int_{V_\gamma}
        \underbrace{\frac{\diff}{\diff r_1}
          \paren{\frac{F_0}{1+\frac{r_1}{2\nu_1} \frac{\diff}{\diff
                r_1}\alpha}}}_{= \: F_1} \:
        \frac{dr_1}
        {\abs\psi^{\sigma -1}\paren{\log\abs{\ell\psi}}^{1+\eps}}
        \prod_{j=2}^\sigma d\log r_j^2 
      \end{aligned} \\
    \tag{$*$} \label{eq:induction-key}
    &=-\frac{\eps}{\sigma -1} \:\fRTFs[0](1+\eps)_{\sigma}
      -\frac 1{\nu_1 \paren{\sigma -1}} \:\fRTFs[1]_{\sigma-1} \; ,
  \end{align*}
  and when $\sigma=1$,
  \begin{align*}
    \fRTFs[0]_{\sigma}
    &=-\frac\eps{\nu_1} \int_{V_\gamma}
      \frac{F_0}{1+\frac{r_1}{2\nu_1} \frac{\diff}{\diff r_1}\alpha} \:
      \frac{d\abs\psi}{\abs\psi 
      \paren{\log\abs{\ell\psi}}^{1+\eps}} 
      =\frac{1}{\nu_1} \int_{V_\gamma}
      \frac{F_0}{1+\frac{r_1}{2\nu_1} \frac{\diff}{\diff r_1}\alpha} \:
      d\paren{\frac{1}{\paren{\log\abs{\ell\psi}}^{\eps}}} \\
    \tag{$**$} \label{eq:induction-sigma=1}
    &\overset{\mathclap{\text{int.~by parts}}}= \quad\;\;\;
      -\int_{V_\gamma}
      \frac{F_1}{\paren{\log\abs{\ell\psi}}^\eps} \:\frac{dr_1}{\nu_1} \; .
  \end{align*} 
  Notice that the boundary terms from the integration by parts in both
  cases vanish, and the equalities hold for any $\eps >0$ and any
  smooth $F_0$, assuming convergence of the constituent integrals.
  
  The claim on the convergence and the equality
  \eqref{eq:induction-claim} is proved for $\sigma =1$ as 
  seen from \eqref{eq:induction-sigma=1}, in which the integral on the
  right-hand-side is absolutely convergent as $F_1$ is smooth on a
  neighbourhood of $\cl{V_\gamma}$ and $\frac
  1{\paren{\log\abs{\ell\psi}}^\eps}$ is bounded from above.

  For the case $\sigma > 1$, make the inductive assumption that
  $\fRTFs[1]_{\sigma-1}$ converges and satisfies
  \eqref{eq:induction-claim} (with $F_1$ in place of $F_0$ and
  $\sigma-1$ in place of $\sigma$).
  The equality \eqref{eq:induction-key} can still be obtained from the
  integration by parts on $\fRTFs[1]_{\sigma-1}$.
  Since both integrals $\fRTFs_\sigma$ and $\fRTFs(1+\eps)_\sigma$ are
  $\geq 0$,
  both of them converge thanks to the finiteness of
  $\fRTFs[1]_{\sigma-1}$. 
  Now applying the inductive assumption on the equality
  \eqref{eq:induction-key} to $\fRTFs[1]_{\sigma -1}$, one obtains
  \begin{align*}
    \fRTFs_\sigma \;\;
    &\overset{\mathclap{\text{by \eqref{eq:induction-key}}}}= \;\;
      -\frac{1}{\nu_1 \paren{\sigma-1}} \:\fRTFs[1]_{\sigma-1}
      -\frac\eps{\sigma-1} \:\fRTFs(1+\eps)_\sigma \\
    &=
      -\eps \sum_{s=1}^{\sigma-2} \prod_{k=1}^{s-1} \paren{k+\eps}
      \cdot \sym^s_{\sigma-2} \cdot \frac{(-1)}{\nu_1 \paren{\sigma-1}} \:
      \fRTFs[1](s+\eps)_{\sigma-1} \\
      &\hphantom{= ~} -\frac{1}{\nu_1 \paren{\sigma-1}} \cdot
      \frac{\paren{-1}^{\sigma-1}} {\paren{\sigma -2}!}
      \int_{V_\gamma}
      \frac{F_\sigma}{\paren{\log\abs{\ell\psi}}^\eps} \:dr_1
      \prod_{j=2}^\sigma \frac{dr_j}{\nu_j} 
      \;-\frac\eps{\sigma-1} \:\fRTFs(1+\eps)_\sigma 
    \\
    &\overset{\mathclap{\text{by \eqref{eq:induction-key}}}}= \;
      \begin{aligned}[t]
        &-\eps \sum_{s=1}^{\sigma-2} \prod_{k=1}^{s-1} \paren{k+\eps}
        \cdot \sym^s_{\sigma-2} \cdot \paren{\frac{s+\eps}{\sigma-1} \:
        \fRTFs(s+1+\eps)_{\sigma} +\fRTFs(s+\eps)_\sigma} \\
        &~+\frac{\paren{-1}^{\sigma}} {\paren{\sigma -1}!}
        \int_{V_\gamma}
        \frac{F_\sigma}{\paren{\log\abs{\ell\psi}}^\eps} \:
        \prod_{j=1}^\sigma \frac{dr_j}{\nu_j}
        \;-\frac\eps{\sigma-1} \:\fRTFs(1+\eps)_\sigma 
      \end{aligned} \\
    &=
      \begin{aligned}[t]
        &-\eps \alert{\sum_{s=2}^{\sigma-1}} \prod_{k=1}^{\alert{s-1}} \paren{k+\eps}
        \cdot \frac{\sym^{\alert{s-1}}_{\sigma-2}}{\sigma-1} \cdot
        \fRTFs(\alert{s}+\eps)_{\sigma}
        -\eps \sum_{s=1}^{\sigma-2} \prod_{k=1}^{s-1} \paren{k+\eps}
        \cdot \sym^s_{\sigma-2} \cdot \fRTFs(s+\eps)_\sigma \\
        &~+\frac{\paren{-1}^{\sigma}} {\paren{\sigma -1}!}
        \int_{V_\gamma}
        \frac{F_\sigma}{\paren{\log\abs{\ell\psi}}^\eps} \:
        \prod_{j=1}^\sigma \frac{dr_j}{\nu_j}
        \;-\frac\eps{\sigma-1} \:\fRTFs(1+\eps)_\sigma 
      \end{aligned} \\
    &=-\eps \sum_{s=1}^{\sigma-1} \prod_{k=1}^{s-1} \paren{k+\eps}
      \cdot \sym^{s}_{\sigma-1} \cdot
      \fRTFs(s+\eps)_{\sigma}
      +\frac{\paren{-1}^{\sigma}} {\paren{\sigma -1}!}
      \int_{V_\gamma}
      \frac{F_\sigma}{\paren{\log\abs{\ell\psi}}^\eps} \:
      \prod_{j=1}^\sigma \frac{dr_j}{\nu_j} \; ,
  \end{align*}
  where the identities $\sym^s_{\sigma-1} = \sym^s_{\sigma-2}
  +\frac{\sym^{s-1}_{\sigma-2}} {\sigma-1}$ for $s=1, \dots, \sigma-1$
  (with the convention $\sym^0_{\sigma-2}=1$ and
  $\sym^{\sigma-1}_{\sigma-2}=0$) are used in the last equality
  above. 
  Note that $\prod_{j=1}^\sigma \frac{d\theta_j}{2}$ is again made
  implicit.

  The claim \eqref{eq:induction-claim} for $F_0 = F_{p,p}$ is thus
  proved by induction.

  For the summands $\fRTFs[p,p']_\sigma$ with $p \neq p'$,
  since the variables with only klt singularities can simply be
  ignored in view of Fubini's theorem, the integral can be rewritten
  and handled like $\fRTFs[p,p]_\sigma$ (i.e.~$p = p'$).
  Indeed, if, for example, $p(j)=p'(j)$ for $j=1,\dots, \sigma -1$ and
  $p(\sigma) \neq p'(\sigma)$, then the corresponding term with
  $F_{p,p'}$ in \eqref{eq:pt-norm-in-coordinates} can be written as
  (noting that $p'(\sigma) \in \set{p(\sigma+1), \dots, p(j_S)}$)
  \begin{align*}
    &~F_{p,p'}
      \:\frac{\bigwedge_{j=1}^{j_S} \paren{\pi \ibar\: dz_{j} \wedge
        d\conj{z_{j}}}} {\prod_{j=1}^{\sigma-1} \abs{z_{p(j)}}^2
      \cdot z_{p(\sigma)} \conj{z_{p'(\sigma)}}} 
      \wedge \;\;
      \bigwedge_{\mathclap{k=j_S+1}}^{n} \frac{\pi \ibar\: dz_k \wedge
      d\conj{z_k}} {\abs{z_k}^{2\ell_k}} \\
    = &~\underbrace{\conj{z_{p(\sigma)}} F_{p,p'}}_{\text{smooth}}
        \:\bigwedge_{j=1}^\sigma
        \underbrace{\frac{\pi\ibar\:dz_{p(j)} \wedge d\conj{z_{p(j)}}}
        {\abs{z_{p(j)}}^2}}_{\text{lc}} 
        \wedge \underbrace{\frac{\bigwedge_{j=\sigma+1}^{j_S}\pi\ibar\:dz_{p(j)} \wedge
        d\conj{z_{p(j)}}} {\conj{z_{p'(\sigma)}}}
        \wedge \;
        \bigwedge_{\mathclap{k=j_S+1}}^n
        \frac{\pi\ibar\:dz_{k} \wedge d\conj{z_{k}}}
        {\abs{z_{k}}^{2\ell_k}}}_{\text{klt}} \; ,
  \end{align*}
  which can then be handled like the term with $F_{p,p}$.
  When convergence of the integral $\fRTFs[p,p']_\sigma$ or its
  derived integrals occurring in the inductive process of integration
  by parts is concerned, the smooth factor 
  $\conj{z_{p(\sigma)}} F_{p,p'}$ (or its derivatives resulted from
  \eqref{eq:def-F_j-in-by-parts} with $\conj{z_{p(\sigma)}} F_{p,p'}$
  in place of $F_0$) is replaced by its absolute value and the
  estimate $\abs{\conj{z_{p(\sigma)}} F_{p,p'}} \lesssim \rho_\gamma$
  is considered.
  The argument that proves the convergence of $\fRTFs[p,p]_\sigma$
  also proves the convergence of
  $\RTF{\mathfrak{I}}[\rho_\gamma](\eps)[\sigma]$, hence the
  convergence of $\fRTFs[p,p']_\sigma$.
  The formula of the form \eqref{eq:induction-claim} for
  $\fRTFs[p,p']_\sigma$ is then obtained via the inductive procedure 
  described above. 
  The other choices of $p$ and $p'$ can also be treated similarly.

  Consequently, the proof of this proposition is completed.
\end{proof}

\begin{remark} \label{rem:RTF-int-by-parts-formula-small-codim}
  Suppose $V_\gamma$ is an open set such that $\lcc \cap V_\gamma =
  \emptyset$ and $\sigma' = j_S < \sigma$
  such that $\lcc[\sigma'] \cap V_\gamma \neq \emptyset$ but
  $\lcc[\sigma'+1] \cap V_\gamma = \emptyset$ as in
  \eqref{eq:pt-norm-in-coordinates}.
  Note that the sum $\sum_{p,p'}$ in \eqref{eq:pt-norm-in-coordinates}
  is reduced to a single term with $p=p'=\id$ in this case.
  A similar argument as in the proof of Proposition
  \ref{prop:RTF-int-by-parts-formula} yields
  \begin{multline*}
    \RTF[\rho_\gamma]|f|(\eps)
    +\eps \sum_{s=1}^{\sigma'} \prod_{k=1}^{s-1} \paren{k+\eps}
    \cdot \sym^s\paren{\frac 1{\sigma-\sigma'} , \dots, \frac 1{\sigma
      -2},  \frac 1{\sigma-1}} \cdot \:\RTF[\rho_\gamma]|f|(s+\eps) \\
    = \paren{-1}^{\sigma'}\frac{\paren{\sigma -\sigma'-1}!} {\paren{\sigma -1}!}
    \:\eps\int_X
    \frac{G_{\sigma'}}{\abs\psi^{\sigma-\sigma'}
      \paren{\log\abs{\ell\psi}}^{1+\eps}} 
  \end{multline*}
  for any $\eps > 0$ and $f \in\spH$, where $G_{\sigma'}$ has the same
  meaning as in Proposition \ref{prop:RTF-int-by-parts-formula} and
  all the integrals involved converge.
\end{remark}


\subsection{$\RTF$  as an entire function and the value of the residue norm $\RTF(0)$}


For any $f\in\spH$ and any $\eps \in \fieldC$ such that $\Re \eps > 0$, it
follows from the inequality
\begin{equation*}
  \abs{\RTF*|f|(\eps)}
  =\abs{\eps\int_X \frac{\abs f^2 \:e^{-\vphi_L-\psi}}{\abs\psi^\sigma
    \paren{\log\abs{\ell\psi}}^{1+\eps}}}
  \leq \abs \eps \int_X \frac{\abs f^2 \:e^{-\vphi_L-\psi}}{\abs\psi^\sigma
    \paren{\log\abs{\ell\psi}}^{1+\Re \eps}}
  =\frac{\abs \eps}{\Re \eps} \:\RTF|f|(\Re \eps)
\end{equation*}
that the function $\eps \mapsto \RTF|f|(\eps)$ is well-defined on the
right-half-plane $\setd{\eps \in \fieldC}{ \Re \eps > 0}$ in $\fieldC$.
Since, when $\Re \eps > 0$, one has
\begin{equation*}
  \fdiff{\eps} \paren{\frac{\eps\: \abs f^2 \:e^{-\vphi_L-\psi}}
    {\abs\psi^\sigma \paren{\log\abs{\ell\psi}}^{1+\eps}}}
  =\frac{\abs f^2 \:e^{-\vphi_L-\psi}}{\abs\psi^\sigma
    \paren{\log\abs{\ell\psi}}^{1+\eps}} \paren{1-\eps
    \log\log\abs{\ell\psi}}
  \; \in L^1(X) \; ,
\end{equation*}
the function $\eps \mapsto \RTF|f|(\eps)$ is thus holomorphic on the
right-half-plane.
Proposition \ref{prop:RTF-int-by-parts-formula} infers that the
function can be continued to the whole complex plane $\fieldC$
analytically.

\begin{thm}[Theorem \ref{thm:RTF-properties}] \label{thm:RTF-entire}
  Given any $f \in \spH$, the function $\eps \mapsto \RTF|f|(\eps)$ can be
  analytically continued to an entire function.
\end{thm}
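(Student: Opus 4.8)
The plan is to localise the problem using the admissible open cover $\set{V_\gamma}_{\gamma \in I}$ and the associated partition of unity $\set{\rho_\gamma}_{\gamma \in I}$. Since $\RTF|f|(\eps) = \sum_{\gamma} \RTF[\rho_\gamma]|f|(\eps)$ and a finite sum of functions with analytic continuations to $\fieldC$ has an analytic continuation to $\fieldC$, it suffices to show that each $\eps \mapsto \RTF[\rho_\gamma]|f|(\eps)$ extends to an entire function. There are three cases for a fixed $V_\gamma$: (i) $S \cap V_\gamma = \emptyset$, (ii) $\lcc \cap V_\gamma \neq \emptyset$, and (iii) $S \cap V_\gamma \neq \emptyset$ but $\lcc \cap V_\gamma = \emptyset$ (so $j_S < \sigma$).

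In case (i), on $V_\gamma$ the function $\psi$ has no poles, so $\rho_\gamma \abs f^2 e^{-\vphi_L-\psi} / \paren{\abs\psi^\sigma \paren{\log\abs{\ell\psi}}^{1+\eps}}$ is, after cancelling the (klt) poles of $e^{-\vphi_L}$ against the zeros of $\abs f^2$, a compactly supported form whose only $\eps$-dependence is the factor $\paren{\log\abs{\ell\psi}}^{-1-\eps}$, which is bounded and depends holomorphically (indeed entirely) on $\eps$; differentiating under the integral sign shows $\eps \mapsto \RTF[\rho_\gamma]|f|(\eps)$ is entire. Case (iii) is handled by Remark \ref{rem:RTF-int-by-parts-formula-small-codim}: that identity expresses $\RTF[\rho_\gamma]|f|(\eps)$ in terms of the integrals $\RTF[\rho_\gamma]|f|(s+\eps)$ for $s = 1, \dots, j_S$ and an integral of the form $\eps \int_X G_{j_S} / \paren{\abs\psi^{\sigma-j_S}\paren{\log\abs{\ell\psi}}^{1+\eps}}$, where $G_{j_S}$ has only klt singularities along coordinate planes. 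Since $\sigma - j_S \geq 1$, the last integral is still of the same shape as the original but with $\sigma$ replaced by $\sigma - j_S$, so one argues by a descending induction on the codimension gap, eventually reaching a term with a strictly smaller $\sigma$-exponent; alternatively, one may iterate the identity until the pole order in $\abs\psi$ is exhausted. The cleanest route is to reduce case (iii) to case (ii) applied on the blow-up, or simply to observe that the right-hand side of the Remark's identity is manifestly holomorphic in $\eps$ on $\Re\eps > -1$ and then repeat.

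The crux is case (ii). Here Proposition \ref{prop:RTF-int-by-parts-formula} gives, for each $\eps > 0$,
\begin{equation*}
  \RTF[\rho_\gamma]|f|(\eps)
  = -\eps \sum_{s=1}^{\sigma-1} \prod_{k=1}^{s-1}\paren{k+\eps} \cdot \sym^s_{\sigma-1} \cdot \RTF[\rho_\gamma]|f|(s+\eps)
  + \frac{\paren{-1}^\sigma}{\paren{\sigma-1}!} \int_X \frac{G_\sigma}{\paren{\log\abs{\ell\psi}}^\eps} \; ,
\end{equation*}
where $G_\sigma$ has only klt singularities. The integral $\int_X G_\sigma \paren{\log\abs{\ell\psi}}^{-\eps}$ is entire in $\eps$ by the argument of case (i) (bounded $\eps$-dependence, differentiate under the integral). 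The terms $\RTF[\rho_\gamma]|f|(s+\eps)$ for $s \geq 1$ have their argument shifted into the region $\Re\eps > 0$ whenever $\Re\eps > -1$, where each such integral is already known (from the opening of Section 2.3) to be holomorphic; and the polynomial prefactors $-\eps\prod_{k=1}^{s-1}(k+\eps)\sym^s_{\sigma-1}$ are entire. Hence the right-hand side is a meromorphic — in fact holomorphic — extension of $\RTF[\rho_\gamma]|f|(\eps)$ to the strip $\Re\eps > -1$. To push past $\eps = -1$ one bootstraps: in the extended formula, each $\RTF[\rho_\gamma]|f|(s+\eps)$ is now defined and holomorphic on $\Re\eps > -1-s \geq -2$ via the formula just obtained, so the identity itself furnishes a holomorphic continuation to $\Re\eps > -2$; iterating, one covers every half-plane $\Re\eps > -m$, $m \in \fieldN$, and thus all of $\fieldC$.

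The main obstacle I anticipate is bookkeeping rather than conceptual: one must check that the apparent poles introduced by the factor $\eps$ and the products $\prod_{k=1}^{s-1}(k+\eps)$ at negative integers do not actually produce poles of the continued function, which follows because the left-hand side $\RTF[\rho_\gamma]|f|(\eps)$ was genuinely holomorphic (not merely meromorphic) on $\Re\eps > 0$ and the continuation is unique — so any spurious pole must cancel. Making this cancellation argument rigorous requires noting that the sum $\sum_\gamma \RTF[\rho_\gamma]|f|(\eps)$ glues the local pieces, that each local piece's continuation agrees on overlapping half-planes by the identity theorem, and that the global object inherits holomorphy. One should also verify the inductive base and the compatibility of the continuations from cases (i), (ii), (iii) on a common domain $\Re\eps > 0$, which is immediate since they all agree with the original convergent integral there.
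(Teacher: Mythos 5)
Your route is the same as the paper's: localise with the partition of unity, use the identity of Proposition \ref{prop:RTF-int-by-parts-formula} (and Remark \ref{rem:RTF-int-by-parts-formula-small-codim} when $\lcc \cap V_\gamma = \emptyset$) to express $\RTF[\rho_\gamma]|f|(\eps)$ through the shifted terms $\RTF[\rho_\gamma]|f|(s+\eps)$ and the $G_\sigma$-integral, then bootstrap half-plane by half-plane. The genuine gap is in the step you dispose of in one line: the claim that $\int_X G_\sigma \paren{\log\abs{\ell\psi}}^{-\eps}$ (and likewise your case (i)/(iii) integrals) is entire ``by bounded $\eps$-dependence, differentiate under the integral''. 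The factor $\paren{\log\abs{\ell\psi}}^{-\eps}$ is bounded only for $\Re\eps \geq 0$ (resp.\ $\Re\eps \geq -1$ for the exponent $-1-\eps$); to reach all of $\fieldC$ you must take $\Re\eps = -R$ with $R$ arbitrarily large, and then $\paren{\log\abs{\ell\psi}}^{R}$ blows up along the polar set of $\psi$. That polar set meets the support of $\rho_\gamma$ in the crucial case (ii) (since $\lcc \cap V_\gamma \neq \emptyset$), and it may meet it even when $S \cap V_\gamma = \emptyset$, because $S$ can be a proper subset of $\psi^{-1}\paren{-\infty}$ — so your assertion that $\psi$ has no poles in case (i) is also unjustified. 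As written, there is no dominating function, and this unbounded-log issue is exactly the analytic content the theorem requires; asserting holomorphy of the $G_\sigma$-term without it begs the question.

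The conclusion is nonetheless true, and this is where the paper's proof spends its effort: it checks absolute convergence of the $G_\sigma$-integral at $\eps = -R$ for every $R \geq 0$ by writing, with $\delta > 0$ so small that $\ell_j + \delta\nu_j < 1$ (the $\nu_j$ from \eqref{eq:psi-wphi-local-expression}), the bound $\paren{\log\abs{\ell\psi}}^{R}\prod_j r_j^{-2\ell_j} = e^{-\delta\abs\psi - \delta\alpha}\,\abs\psi\,\frac{\paren{\log\abs{\ell\psi}}^{R}}{\abs\psi}\prod_j r_j^{-2\paren{\ell_j+\delta\nu_j}}$ and invoking the $x\log x$ inequality \eqref{eq:xlogx-estimate}; in the case $\lcc \cap V_\gamma = \emptyset$ the growing log power is instead absorbed by $\abs\psi^{-\paren{\sigma-\sigma'}}$, again via \eqref{eq:xlogx-estimate}. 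Once this convergence (locally uniform in $\eps$) is supplied, your bootstrap through $\Re\eps > -m$ goes through essentially verbatim via \eqref{eq:induction-claim}. Note also that your worry about ``apparent poles'' is vacuous: $\eps\prod_{k=1}^{s-1}\paren{k+\eps}$ are polynomial factors and the identity is solved for $\RTF[\rho_\gamma]|f|(\eps)$ without dividing by anything, so no cancellation argument is needed.
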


\begin{proof}
  It suffices to show that, under the snc assumption
  \ref{assumption:snc}, $\eps \mapsto \fRTF$ is an entire function
  for each $\gamma \in I$, which corresponds to $V_\gamma$ in the open
  cover $\set{V_\gamma}_{\gamma \in I}$.

  First consider the case when $\lcc \cap V_\gamma \neq \emptyset$.
  In view of the decomposition \eqref{eq:pt-norm-in-coordinates} and
  using the reduction argument as well as the notation in the proof of
  Proposition \ref{prop:RTF-int-by-parts-formula}, it suffices to show
  that $\fRTFs = \fRTFs_\sigma$ is an entire function (the subscript
  $\sigma$ is made implicit as there is no induction on $\sigma$
  required).

  The proof starts by showing that the integral on the right-hand-side of
  \eqref{eq:induction-claim} in the proof of Proposition
  \ref{prop:RTF-int-by-parts-formula} is an entire function in $\eps$.
  It suffices to check that the integral converges absolutely when
  $\eps = -R$ for any number $R\geq 0$.
  Indeed, up to a multiple constant, the integral is of the form
  \begin{equation*}
    \int_{V_\gamma} F_\sigma \:
    \paren{\log\abs{\ell\psi}}^R \:\prod_{j=1}^n
    \frac{dr_j^2}{r_j^{2\ell_j}}
  \end{equation*}
  when $\eps = -R$, where $\ell_j < 1$ for all $j=1,\dots, n$.
  Recall that $\nu_j$'s are the coefficients in the local expression
  of $\psi$ on $V_\gamma$ in \eqref{eq:psi-wphi-local-expression}.
  Take a sufficiently small number $\delta > 0$ such that $\ell_j +
  \delta \nu_j < 1$ for all $j=1, \dots , n$.
  Then,
  \begin{align*}
    \paren{\log\abs{\ell\psi}}^R \:\prod_{j=1}^n
    \frac{1}{r_j^{2\ell_j}}
    =
    \abs\psi \frac{\paren{\log\abs{\ell\psi}}^R}{\abs\psi}
    \:\prod_{j=1}^n
    \frac{r_j^{2\delta\nu_j}}{r_j^{2\paren{\ell_j +\delta\nu_j}}}
    &=
    e^{-\delta\abs\psi -\delta \alpha}
    \abs\psi \frac{\paren{\log\abs{\ell\psi}}^R}{\abs\psi}
    \:\prod_{j=1}^n
      \frac{1}{r_j^{2\paren{\ell_j +\delta\nu_j}}} \\
    &\overset{\mathclap{\text{by \eqref{eq:xlogx-estimate}}}}\leq
      \qquad
      \frac 1\delta \:\ell\: \paren{\frac Re}^R \:
      \frac{e^{-\delta \alpha}}{\prod_{j=1}^n r_j^{2\paren{\ell_j
      +\delta\nu_j}}} \; ,
  \end{align*}
  where the right-hand-side is integrable with respect to $\prod
  dr_j^2$ for any $R \geq 0$ (under the convention $0^0 = 1$).
  Since $F_\sigma$ is smooth on a neighbourhood of $\cl{V_\gamma}$,
  the integral on the right-hand-side of \eqref{eq:induction-claim}
  converges absolutely for every $\eps \in \fieldR$, and consequently
  every $\eps \in \fieldC$.
  It is also easy to check that the integral is entire in $\eps$.

  Now, $\fRTFs$ can be analytically continued via
  \eqref{eq:induction-claim}.
  First, \eqref{eq:induction-claim} holds for all $\eps \in
  \setd{w \in \fieldC}{\Re w > 0}$ by the identity theorem.
  Then, $\fRTFs$ for $\eps$ with $-1 <\Re \eps \leq 0$ can be defined
  and shown to be holomorphic on the region via
  \eqref{eq:induction-claim} as all terms in
  \eqref{eq:induction-claim} other than $\fRTFs$ are already
  well-defined and holomorphic.
  The same argument can be applied to define $\fRTFs$ and to show its
  holomorphicity on the regions $\setd{\eps \in \fieldC}{ -\mu-1 < \Re
    \eps \leq -\mu}$ for $\mu =1,2,3,\dots$ successively via an
  induction on $\mu$.
  This concludes that $\eps \mapsto \fRTFs$, and consequently $\eps
  \mapsto \fRTF$ with $\lcc \cap V_\gamma \neq \emptyset$, is an
  entire function.

  To prove $\fRTF$ being entire for the case $\lcc \cap V_\gamma
  =\emptyset$, consider the equation in Remark
  \ref{rem:RTF-int-by-parts-formula-small-codim} in place of
  \eqref{eq:induction-claim}.
  The argument is easier since that $\eps \int_X
  \frac{G_{\sigma'}}{\abs\psi^{\sigma -\sigma'}
    \paren{\log\abs{\ell\psi}}^{1+\eps}}$ being absolutely convergent even
  when $\eps = -R$ for any $R \geq 0$ can be seen from the inequality
  \begin{equation*}
    \frac{\paren{\log\abs{\ell\psi}}^{R-1}} {\abs\psi^{\sigma -\sigma'}}
    \leq \frac{\paren{\log\abs{\ell\psi}}^R} {\abs\psi^{\sigma -\sigma'}}
    \;\; \overset{\text{by \eqref{eq:xlogx-estimate}}}\leq \;\;
    \ell^{\sigma-\sigma'} \: \paren{\frac{R}{e\paren{\sigma -\sigma'}}}^{R} \; .
  \end{equation*}
  The rest of the arguments are the same as the previous case.

  As a result, the function $\eps \mapsto \RTF|f|(\eps)$ is entire.
\end{proof}

\begin{remark} \label{rem:RTI-converge-at-small-codim}
  The proof of $\eps \mapsto \fRTF$ being entire for the case $\lcc
  \cap V_\gamma =\emptyset$ indeed verifies that the integral
  \begin{equation*}
    \frac{1}{\eps} \:\fRTF
    =\int_{V_\gamma} \frac{\rho_\gamma \abs f^2 \:e^{-\wphi_L-\psi_S}}{\abs\psi^\sigma
    \paren{\log\abs{\ell\psi}}^{1+\eps}}
  \end{equation*}
  is convergent for all $\eps \in \fieldC$.
\end{remark}

\begin{cor}[Theorem \ref{thm:RTF-properties}] \label{cor:RTF-at-0}
  For any $f\in\spH$, the value $\RTF|f|(0)$ is the squared norm of
  $f$ on $\lcc$ with respect to the lc-measure $\lcV$, i.e.
  \begin{equation*}
    \RTF|f|(0) = \int_{\lcc} \abs f_\omega^2 \lcV \; .
  \end{equation*}
  Moreover, the value $\RTF|f|(0)$ is invariant even if $\ell\psi$ is
  replaced by another function $\ell'\psi' < 0$ on $X$ (but without changing
  $\wphi_L+\psi_S = \vphi_L +\psi$) as long as $\abs{\ell'\psi'} >
  1$ and $\psi -\psi'$ is smooth on $X$. 
\end{cor}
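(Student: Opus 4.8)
The plan is to compute both $\RTF|f|(0)[\sigma]$ and the right--hand side $\int_{\lcc}\abs f_\omega^2\lcV$ by the \emph{same} integration by parts, so that the two are identified with one explicitly computable residue, and then to read off the invariance by comparing lc--measures directly. (Throughout, $f\in\spH$ means $f$ vanishes along $\lcc[\sigma+1]$, hence along $\lcc[\sigma']$ for every $\sigma'\geq\sigma+1$.) First I would let $\eps\tendsto0^+$ in Proposition \ref{prop:RTF-int-by-parts-formula}, chart by chart. On a $V_\gamma$ with $\lcc\cap V_\gamma\neq\emptyset$ the ``error'' sum on the left is $\eps$ times a fixed combination of the integrals $\fRTF(s+\eps)$ with $s\geq1$, which stay bounded as $\eps\tendsto0^+$ (each is $\leq$ a fixed multiple of the finite $\fRTF(s)$, finiteness being part of Proposition \ref{prop:RTF-int-by-parts-formula}), so that whole sum tends to $0$; on the right, $\paren{\log\abs{\ell\psi}}^{-\eps}\to1$ pointwise, and since $\abs{\ell\psi}>1$ on the compact $X$ forces $\log\abs{\ell\psi}\geq c_0>0$ one has $\paren{\log\abs{\ell\psi}}^{-\eps}\leq\max\set{c_0^{-1},1}$ for $0<\eps\leq1$, so dominated convergence (using $G_\sigma\in L^1(X)$ by its klt singularities) gives $\int_X G_\sigma\paren{\log\abs{\ell\psi}}^{-\eps}\to\int_X G_\sigma$. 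On a $V_\gamma$ with $\lcc\cap V_\gamma=\emptyset$ the same passage to the limit in Remark \ref{rem:RTF-int-by-parts-formula-small-codim}, together with Remark \ref{rem:RTI-converge-at-small-codim}, gives $\fRTF(0)=0$. Summing over the partition of unity thus yields $\RTF|f|(0)[\sigma]=\frac{\paren{-1}^\sigma}{\paren{\sigma-1}!}\sum_\gamma\int_X G_\sigma$, the integral of the explicit $(n,n)$--forms produced by the $\sigma$--fold iteration \eqref{eq:def-F_j-in-by-parts}.

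Then I would run the parallel computation for $\int_{\lcc}\abs f_\omega^2\lcV$. By Definition \ref{def:lc-measure}, applied with $g\equiv1$ and $\lift f=f$ (admissible since $f\in\spH=\cohgp0[X]{K_X\otimes L\otimes\mtidlof{\vphi_L}\cdot\defidlof{\lcc[\sigma+1]}}$ is already holomorphic and lies in $\mtidlof{\vphi_L}\cdot\defidlof{\lcc[\sigma+1]}$), one has $\int_{\lcc}\abs f_\omega^2\lcV=\lim_{\eps\tendsto0^+}\eps\int_X\abs f^2\,e^{-\vphi_L-\psi}\abs\psi^{-(\sigma+\eps)}$. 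Carrying out on this integral the very same partition--of--unity reduction \eqref{eq:pt-norm-in-coordinates} and the very same $\sigma$ rounds of integration by parts as in the proof of Proposition \ref{prop:RTF-int-by-parts-formula} --- the only change being that, in the absence of the factor $\paren{\log\abs{\ell\psi}}^{-(1+\eps)}$, each $\tfrac{d\abs\psi}{\abs\psi^{k+\eps}}$ is an exact differential, so no ``error'' terms appear and, after the $\sigma$ rounds, the surviving regularising factor is $\abs\psi^{-\eps}$ rather than $\paren{\log\abs{\ell\psi}}^{-\eps}$ --- one is led to exactly the same functions $F_j$ of \eqref{eq:def-F_j-in-by-parts}; letting $\eps\tendsto0^+$ (now $\abs\psi^{-\eps}\to1$, again because $\abs\psi\geq c_0>0$ on the compact $X$) produces the identical value $\frac{\paren{-1}^\sigma}{\paren{\sigma-1}!}\sum_\gamma\int_X G_\sigma$. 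Hence $\RTF|f|(0)[\sigma]=\int_{\lcc}\abs f_\omega^2\lcV$. (Alternatively, one may simply quote the computation of the $\sigma$--lc--measure in \cite{Chan&Choi_ext-with-lcv-codim-1}*{Prop.~2.2.1, Rem.~2.2.3} for this last identification.)

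For the invariance, replace $\ell\psi$ by $\ell'\psi'$ with $\abs{\ell'\psi'}>1$ and $h:=\psi-\psi'$ smooth on $X$, and put $\vphi_L':=\vphi_L+h$, so that $\vphi_L'+\psi'=\vphi_L+\psi$ (hence $e^{-\vphi_L'-\psi'}=e^{-\vphi_L-\psi}$) and $\mtidlof{\vphi_L'+m\psi'}=\mtidlof{\vphi_L+m\psi}$ for all $m$ (a smooth shift changes no multiplier ideal); thus $S$ and the loci $\lcc[\bullet]$ are unchanged and the two paragraphs above apply verbatim to $\paren{\vphi_L',\psi',\ell'}$, giving that $\RTF|f|(0)[\sigma]$ computed with $\ell'\psi'$ equals $\lim_{\eps\tendsto0^+}\eps\int_X\abs f^2\,e^{-\vphi_L-\psi}\abs{\psi'}^{-(\sigma+\eps)}$. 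It remains to check this coincides with $\lim_{\eps\tendsto0^+}\eps\int_X\abs f^2\,e^{-\vphi_L-\psi}\abs\psi^{-(\sigma+\eps)}$. Applying the mean value theorem to $t\mapsto t^{-(\sigma+\eps)}$ and using $\abs\psi,\abs{\psi'}\geq c_0>0$ together with $\abs{\psi'}/\abs\psi=1+h/\abs\psi\to1$ along $S$, one obtains $\abs{\,\abs{\psi'}^{-(\sigma+\eps)}-\abs\psi^{-(\sigma+\eps)}\,}\lesssim\abs\psi^{-(\sigma+1+\eps)}$ uniformly for $\eps\in(0,1]$, so the difference of the two limits has absolute value at most $C\lim_{\eps\tendsto0^+}\eps\int_X\abs f^2\,e^{-\vphi_L-\psi}\abs\psi^{-(\sigma+1+\eps)}$, which by Definition \ref{def:lc-measure} (with $\sigma+1$ in place of $\sigma$) is $C$ times the $(\sigma+1)$--lc--measure of $f$. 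Since $f\in\spH$ vanishes along $\lcc[\sigma+1]$, this $(\sigma+1)$--lc--measure is $0$ --- equivalently $\RTF*|f|(0)[\sigma+1]=0$ by the codimension--$(\sigma+1)$ analogue of the characterisation $\spH[\sigma]=\setd{g\in\spH}{\RTF*|g|(0)[\sigma]=0}$ recalled after Theorem \ref{thm:RTF-properties}, or directly from the residue computation in \cite{Chan&Choi_ext-with-lcv-codim-1}. Therefore $\RTF|f|(0)[\sigma]$ is unchanged, as claimed.

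The step I expect to be the main obstacle is the second paragraph: checking cleanly that the two \emph{different} regularisations --- $\abs\psi^{-\sigma}\paren{\log\abs{\ell\psi}}^{-(1+\eps)}$ for $\RTF$ versus $\abs\psi^{-(\sigma+\eps)}$ for the lc--measure --- are stripped off by the integration by parts into literally the \emph{same} residue form $G_\sigma$, so that one need not route the argument through the prequel. Concretely, at each of the $\sigma$ rounds one must verify that the combinatorics of the correcting factors $\bigl(1+\tfrac{r_j}{2\nu_j}\tfrac{\diff}{\diff r_j}\alpha\bigr)^{-1}$, the vanishing of the boundary terms, and the coefficients generated, are insensitive to whether the slowly varying factor being differentiated is $\paren{\log\abs{\ell\psi}}^{-\eps}$ or a power $\abs\psi^{-(k+\eps)}$, and that the leftover factor tends to $1$ in $L^\infty(X)$ --- which is exactly where the hypotheses $\abs{\ell\psi}>1$, resp.\ $\abs{\ell'\psi'}>1$, on the compact $X$ enter.
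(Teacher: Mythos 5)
Your opening step is the paper's: evaluate $\RTF|f|(0)$ chart by chart from the integration-by-parts identity of Proposition \ref{prop:RTF-int-by-parts-formula} (and Remark \ref{rem:RTF-int-by-parts-formula-small-codim} together with Remark \ref{rem:RTI-converge-at-small-codim} on charts missing $\lcc$), the error sum being killed by its $\eps$ prefactor. After that you diverge. The paper pushes the local computation one step further: a successive application of the fundamental theorem of calculus to $F_\sigma$ from \eqref{eq:def-F_j-in-by-parts} collapses $\fRTFs[p,p](0)$ to the explicit lc-centre integral $\frac{\pi^\sigma}{\paren{\sigma-1}!\,\vect\nu_p}\int_{\lcc^p_\gamma}F_{p,p}$ and shows the cross terms $p\neq p'$ of \eqref{eq:pt-norm-in-coordinates} contribute $0$; the identification with $\int_{\lcc}\abs f_\omega^2\lcV$ is then a quotation of \cite{Chan&Choi_ext-with-lcv-codim-1}*{Prop.~2.2.1, Rem.~2.2.3}, and the invariance is read off at once because that final formula contains neither $\ell$ nor the smooth part $\alpha$ of $\psi$. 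You instead match the two regularisations $\abs\psi^{-\sigma}\paren{\log\abs{\ell\psi}}^{-(1+\eps)}$ and $\abs\psi^{-(\sigma+\eps)}$ by running the same $\sigma$ rounds of integration by parts on the lc-measure limit; this does work (the coefficients become $(1+\eps)\dotsm(\sigma-1+\eps)$ instead of $(\sigma-1)!$, the leftover $\abs\psi^{-\eps}$ is dominated since $\abs\psi\geq 1/\ell$, and the cross terms need not be evaluated since they are matched rather than computed). But note your ``alternative'' of simply quoting the prequel for the identification is not a genuine shortcut: matching the prequel's lc-centre formula against $\sum_\gamma\int G_\sigma$ still requires exactly the fundamental-theorem step and the vanishing of the cross terms, i.e.\ the part of the paper's argument you skipped. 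Your invariance argument is genuinely different: a mean-value comparison of $\abs{\psi'}^{-(\sigma+\eps)}$ with $\abs\psi^{-(\sigma+\eps)}$, the discrepancy being controlled by the $(\sigma+1)$-regularised integral whose limit is the $(\sigma+1)$-lc-measure mass of $f$ and vanishes because $f$ vanishes along $\lcc[\sigma+1]$. This is sound (and makes the $\ell$-independence automatic), but it needs the extra input that this limit exists and vanishes, which must come from the prequel's residue computation rather than from the ``characterisation of $\spH[\sigma]$'' you allude to, since that characterisation is itself a consequence of the present corollary; the paper's route needs no such input.

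Two points to tighten. First, your dominated-convergence passages (both $\paren{\log\abs{\ell\psi}}^{-\eps}\to 1$ and the boundedness of $\fRTF(s+\eps)$ as $\eps\tendsto 0^+$) require $\log\abs{\ell\psi}$ bounded away from $0$, which the hypothesis $\abs{\ell\psi}>1$ alone does not literally provide; either enlarge $\ell$ slightly, or avoid limits altogether by evaluating \eqref{eq:induction-claim} at $\eps=0$ through the entire continuation of Theorem \ref{thm:RTF-entire}, as the paper implicitly does. Second, when applying your first two paragraphs to the shifted data $\paren{\vphi_L+h,\psi',\ell'}$, it is worth saying explicitly that the machinery uses only the snc/neat-singularity structure, the unchanged $\vphi_L+\psi$ and the unchanged multiplier ideals, not the psh hypothesis on $\vphi_L+\paren{1+\beta}\psi$, so the smooth shift is indeed harmless.
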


\begin{proof}
  Assume the snc assumption \ref{assumption:snc} without loss of
  generality.
  It suffices to evaluate $\fRTF(0)$ for each $\gamma \in I$, which
  corresponds to $V_\gamma$ in the open cover $\set{V_\gamma}_{\gamma
    \in I}$.

  For $\gamma$ such that $\lcc \cap V_\gamma =\emptyset$, as Remark
  \ref{rem:RTI-converge-at-small-codim} remarks that $\frac 1\eps
  \fRTF$ converges for any $\eps \in \fieldC$, it follows that
  $\fRTF(0) = 0$.
  (This equality can also be obtained by substituting $\eps = 0$ into
  the equation in Remark
  \ref{rem:RTF-int-by-parts-formula-small-codim}.)

  It remains to consider the case $\lcc \cap V_\gamma \neq
  \emptyset$.
  It suffices to evaluate $\fRTF(0) =\frac{(-1)^\sigma}{\paren{\sigma
      -1}!} \int_X G_\sigma$ according to Proposition
  \ref{prop:RTF-int-by-parts-formula}. 
  In view of the decomposition \eqref{eq:pt-norm-in-coordinates} and
  using the notation in the proof of Proposition
  \ref{prop:RTF-int-by-parts-formula}, it suffices to evaluate each
  summand $\fRTFs[p,p'](0) =\fRTFs[p,p'](0)_\sigma$ of the
  decomposition of $\fRTF(0)$.
  When $p = p'$, \eqref{eq:induction-claim} gives
  \begin{align*}
    \fRTFs[p,p](0)
    &= \frac{(-1)^\sigma}{\paren{\sigma-1}!}
    \int_{V_\gamma} F_\sigma \prod_{j=1}^\sigma
    \frac{dr_{p(j)} d\theta_{p(j)}}{2\nu_{p(j)}}
    &&
       \begin{aligned}
         &\text{(where $F_0 := F_{p,p}$ and }\\
         &\text{ $F_\sigma$ given by \eqref{eq:def-F_j-in-by-parts})}
       \end{aligned}
    \\
    &= \frac{\pi^\sigma}{\paren{\sigma-1}!\: \vect\nu_p}
      \int_{\lcc^p_\gamma} \res{F_{p,p}}_{\lcc^p_\gamma}
      &&\text{(where $\vect\nu_p := \prod_{j=1}^\sigma
      \nu_{p(j)}$)}
  \end{align*}
  after a successive application of the fundamental theorem of
  calculus with respect to the variables $r_{p(\sigma)}, \dots,
  r_{p(2)}, r_{p(1)}$.
  Note that the last expression on the right-hand-side is independent
  of the number $\ell$ and the function $\alpha$ in
  $\res\psi_{V_\gamma}$, which leads to the last claim in this
  corollary.

  When $p \neq p'$, as discussed in the proof of Proposition
  \ref{prop:RTF-int-by-parts-formula}, the integral $\fRTFs[p,p'](0)$ can
  be handled like $\fRTFs[p,p](0)$, but the role of $F_0 = F_{p,p}$ is
  replaced by the product of $F_{p,p'}$ with some coordinate functions
  vanishing on $\lcc$ (which is the function $\conj{z_{p(\sigma)}}
  F_{p,p'}$ in the example in the proof of Proposition
  \ref{prop:RTF-int-by-parts-formula}).
  It follows from the above computation that $\fRTFs[p,p'](0) = 0$.

  As a result,
  \begin{equation*}
    \RTF|f|(0) =\sum_\gamma \sum_p \fRTFs[p,p](0)
    =\sum_\gamma \sum_p
    \frac{\pi^\sigma}{\paren{\sigma -1}! \:\vect\nu_p} \int_{\lcc^p_\gamma} F_{p,p}
    =\int_{\lcc} \abs f_\omega^2 \lcV \; ,
  \end{equation*}
  where the last equality follows from
  \cite{Chan&Choi_ext-with-lcv-codim-1}*{Prop.~2.2.1 and Remark
    2.2.3}.
\end{proof}


\section{$L^2$ estimate for the extension problem}
\label{sec:L2-estimates}

\subsection{A non-universal estimate}
\label{sec:non-uni-estimate}


Let $\spH = \spE \oplus \spH[\sigma]$ be the orthogonal decomposition of
$\spH$ with respect to the squared norm $\RTF|\cdot|(1)
=\RTF|\cdot|(1)[\sigma]$.
Then elements in $\spE$ are the holomorphic extensions of elements
in $\spH / \spH[\sigma]$ (which are sections on $\lcc$) with minimal
norm with respect to the squared norm $\RTF|\cdot|(1)$.
\begin{thm}[Theorem \ref{thm:non-universal-estimate_intro}]
  \label{thm:non-universal-estimate} 
  On a compact \textde{Kähler} manifold $X$ with the potential
  $\vphi_L$ of a line bundle $L$ over $X$ and the functions $\psi$ on
  $X$ given as above, there exists a constant $C := C\paren{X,
    \vphi_L+\psi} > 0$ such that, when $\psi$ and $\ell >0$ are
  chosen to satisfy the normalisation $\log\abs{\ell\psi} \geq C$
  (by varying $\ell >0$ or adding suitable constant to $\psi$ but
  without changing $\vphi_L+\psi$), the estimate
  \begin{equation*}
    \RTF|f|(1) \leq \RTF|f|(0)
  \end{equation*}
  holds for all $f\in\spE$.
\end{thm}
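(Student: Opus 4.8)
The plan is to feed the integration-by-parts identity of Proposition \ref{prop:RTF-int-by-parts-formula} (together with Remark \ref{rem:RTF-int-by-parts-formula-small-codim}) backwards at $\eps=1$, exploiting that the $(n,n)$-form $G_\sigma$ occurring there is left \emph{unchanged} by the admissible normalisations whereas the factor $\paren{\log\abs{\ell\psi}}^{\eps}$ multiplying it is exactly what the normalisation makes large, and then to upgrade the resulting estimate to one that is uniform over $\spE$ by a soft compactness argument. First I would record the algebraic set-up: since $X$ is compact, $\spH$ is finite-dimensional; the Hermitian form $g\mapsto\RTF|g|(0)$ on $\spH$ is positive semi-definite with radical $\spH[\sigma]$ (as noted just after Theorem \ref{thm:RTF-properties}), so it descends to a genuine norm, call it $N_0$, on $\spH/\spH[\sigma]$, and by Corollary \ref{cor:RTF-at-0} the value $\RTF|g|(0)$, hence $N_0$, does not change when $\ell$ is enlarged or $\psi$ is replaced by $\psi+\text{const}$ (with $\vphi_L$ corrected so that $\vphi_L+\psi$ is kept). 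Fix once and for all a linear section $h\colon\spH/\spH[\sigma]\to\spH$ of the projection. In the decomposition $\spH=\spE\oplus\spH[\sigma]$, orthogonal for $\RTF|\cdot|(1)$, the element $f\in\spE$ representing a class $u$ has minimal $\RTF|\cdot|(1)$-norm among its representatives, so $\RTF|f|(1)\le\RTF|h(u)|(1)$, while $\RTF|f|(0)=N_0(u)^2$. As both sides of the desired inequality are homogeneous of degree two in $f$, it then suffices to find $C>0$ such that, once $\log\abs{\ell\psi}\ge C$,
\[
  \RTF|h(u)|(1)\ \le\ 1\qquad\text{for every $u$ on the compact sphere }\Sigma:=\setd{u\in\spH/\spH[\sigma]}{N_0(u)=1}.
\]

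For this, I would fix $g\in\spH$ and work on an admissible $V_\gamma$ with $\lcc\cap V_\gamma\neq\emptyset$. Specialising Proposition \ref{prop:RTF-int-by-parts-formula} to $\eps=1$ and $G=\abs g^2$ and dropping the non-negative lower-order terms on its left-hand side (and using $\RTF[\rho_\gamma]|g|(1)\ge0$ to pass to absolute values) gives
\[
  0\ \le\ \RTF[\rho_\gamma]|g|(1)\ \le\ \frac{1}{\paren{\sigma-1}!}\int_X\frac{\abs{G_\sigma}}{\log\abs{\ell\psi}}\ \le\ \frac{1}{\paren{\sigma-1}!\,C}\int_X\abs{G_\sigma}\,,
\]
the last inequality being the normalisation $\log\abs{\ell\psi}\ge C>0$. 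The key observation is that $G_\sigma$, and therefore the finite integral $\int_X\abs{G_\sigma}$, does not depend on the normalisation: as its construction in the proof of Proposition \ref{prop:RTF-int-by-parts-formula} shows, $G_\sigma$ is assembled from $\rho_\gamma$, the holomorphic factors of $g$, the smooth data $\beta$ and $\sm\vphi_S$, and the numbers $\nu_j$ and the derivatives $\tfrac{\diff}{\diff r_j}\alpha$ of the local expression of $\psi$, and — exactly as at the end of the proof of Corollary \ref{cor:RTF-at-0} — none of these is altered when $\ell$ is enlarged or a constant is added to $\psi$; only the denominator $\log\abs{\ell\psi}$ changes. For the remaining $V_\gamma$ (those with $\lcc\cap V_\gamma=\emptyset$) the same manipulation applied to the identity of Remark \ref{rem:RTF-int-by-parts-formula-small-codim} at $\eps=1$, combined with the fact that $\abs\psi$ stays bounded below by a fixed positive constant under the allowed normalisations, yields $\RTF[\rho_\gamma]|g|(1)\le\mathrm{const}\cdot C^{-2}\int_X\abs{G_{\sigma'}}$ with $G_{\sigma'}$ again normalisation-independent.

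Summing over the finite admissible cover and its partition of unity $\set{\rho_\gamma}$ then yields $\RTF|g|(1)\le C^{-1}K_1(g)+C^{-2}K_2(g)$, where $K_1$ and $K_2$ are finite, independent of the normalisation, and continuous in $g\in\spH$ (they are built from $L^1$ norms of forms whose coefficients are polynomial in $g$ and its derivatives up to order $\sigma$). Restricting to $g=h(u)$, the continuous function $u\mapsto C^{-1}K_1(h(u))+C^{-2}K_2(h(u))$ is bounded on the compact set $\Sigma$ by $C^{-1}M_1+C^{-2}M_2$ for some $M_1,M_2<\infty$ that, like everything above, are independent of the normalisation. Choosing $C=C(X,\vphi_L+\psi)>0$ so large that $C^{-1}M_1+C^{-2}M_2\le1$ gives $\RTF|h(u)|(1)\le1$ for all $u\in\Sigma$, and hence, by the reduction of the first paragraph and homogeneity, $\RTF|f|(1)\le\RTF|f|(0)$ for all $f\in\spE$, whenever the normalisation is chosen so that $\log\abs{\ell\psi}\ge C$ on $X$.

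The single delicate point I expect is the uniformity in the normalisation: $\spE$ itself moves with $\ell$ (it is defined as an orthogonal complement for $\RTF|\cdot|(1)$), so one cannot simply freeze an $f\in\spE$ and let $\log\abs{\ell\psi}\to\infty$; replacing the unknown minimal representative by the fixed section $h(u)$ before estimating, and then using compactness of $\Sigma$ together with the homogeneity of the two (semi-)norms, is what circumvents this. The rest is book-keeping on top of Proposition \ref{prop:RTF-int-by-parts-formula}, Remark \ref{rem:RTF-int-by-parts-formula-small-codim} and the invariance in Corollary \ref{cor:RTF-at-0}; in particular no curvature hypothesis on $\vphi_L+\paren{1+\beta}\psi$ is used, in agreement with the remark following Theorem \ref{thm:non-universal-estimate_intro}.
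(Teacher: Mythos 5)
Your proposal is correct, but it reaches the estimate by a more quantitative route than the paper. The paper's own proof is soft: it fixes a $\RTF<\cdot,\cdot>(0)$-orthonormal basis $\set{\Phi_k}$ of $\spE$, observes (by dominated convergence, with $\vphi_L+\psi$ held fixed) that each entry $\RTF<\Phi_j,\Phi_k>(1)$ tends to $0$ as $\log\abs{\ell\psi}$ is pushed to $+\infty$ while $\RTF<\Phi_j,\Phi_k>(0)$ is unchanged by Corollary \ref{cor:RTF-at-0}, and concludes by finiteness of the basis that some normalisation makes the matrix inequality $\bigl[\RTF<\Phi_j,\Phi_k>(1)\bigr]\leq I$ hold; no integration by parts is invoked. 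You instead extract the smallness of $\RTF|\cdot|(1)$ from Proposition \ref{prop:RTF-int-by-parts-formula} and Remark \ref{rem:RTF-int-by-parts-formula-small-codim} at $\eps=1$, using that $G_\sigma$ (being built from $\nu_j$, the derivatives of $\alpha$, and data of $\vphi_L+\psi$) is untouched by the admissible normalisations, which buys you an explicit decay rate $C^{-1}K_1+C^{-2}K_2$ and hence a genuine threshold $C$ rather than the mere existence of a suitable normalisation. You are also more careful than the paper on one point you rightly flag: since $\spE$ is the $\RTF|\cdot|(1)$-orthogonal complement, it moves with the normalisation, and your reduction via the quotient norm $N_0$, a fixed linear section $h$, minimality of elements of $\spE$ in their class, and compactness of the $N_0$-unit sphere closes this loop explicitly; the paper's proof implicitly relies on the same minimality observation (an estimate on any fixed complement transfers to $\spE$ because each $f\in\spE$ minimises $\RTF|\cdot|(1)$ in its class, and $\RTF|\cdot|(0)$ descends to the quotient) without spelling it out. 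Two minor caveats in your write-up: your bound in the case $\lcc\cap V_\gamma=\emptyset$ uses that $\abs\psi$ stays bounded below, which holds precisely because the admissible normalisations only increase $\ell$ or subtract constants from $\psi$ (as in the paper's proof), and you should also cover the trivial charts with $S\cap V_\gamma=\emptyset$, where the direct bound $\RTF[\rho_\gamma]|g|(1)\leq C^{-2}\abs\psi^{-\sigma}\int\rho_\gamma\abs g^2e^{-\vphi_L-\psi}$ with normalisation-invariant right-hand side does the job; neither affects the validity of the argument.
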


\begin{proof}
  Notice that both $\RTF<\cdot,\cdot>(0)$ and $\RTF<\cdot,\cdot>(1)$
  are positive definite hermitian inner product on $\spE$ (and
  $\RTF<\cdot,\cdot>(0)$ is trivial on $\spH[\sigma]$).
  Take a $\fieldC$-basis $\set{\Phi_k}_{k=1}^\dimE$ of
  $\spE$ such that it is orthonormal with respect to $\RTF<\cdot,\cdot>(0)$.

  For any $f, g \in\spH$, the value $\RTF<f,g>(1)$ converges to $0$ when
  $\log\abs{\ell\psi}$ is increased to $+\infty$ by increasing $\ell$
  or adding some (negative) constant to $\psi$, while $\RTF<f,g>(0)$
  does not change according to Corollary \ref{cor:RTF-at-0}.
  Therefore, as the basis $\set{\Phi_k}_{k=0}^\dimE$ of $\spE$ is finite, there
  exists a normalisation of $\log\abs{\ell\psi}$ such that the
  inequality
  \begin{equation*}
    \begin{bmatrix}
      \RTF<\Phi_j, \Phi_k>(1)
    \end{bmatrix}_{1\leq j,k\leq \dimE}
    \leq
    \begin{bmatrix}
      \RTF<\Phi_j, \Phi_k>(0)
    \end{bmatrix}_{1\leq j,k\leq \dimE}
    = I_{\dimE} \;\;\;\text{(identity matrix)}
  \end{equation*}
  between $\paren{\dimE \times \dimE}$-hermitian matrices holds true.
  The claim thus follows.
\end{proof}


\subsection{Explicit examples}
\label{sec:examples}


Here are examples of the function $\RTF|f|(\eps)[\sigma]$ which can be
shown to satisfy $\RTF|f|(1)[\sigma] \leq \RTF|f|(0)[\sigma]$, or indeed
$\RTF|f|(\eps)[\sigma] \leq \RTF|f|(0)[\sigma]$ for all $\eps \geq 0$,
under the normalisation $\log\abs{\ell\psi} \geq 1$.

\begin{example} \label{ex:K_X+L=trivial}
  \newcommand{\bval}[1]{\left[#1\right]}
  \newcommand{\termB}[1]{{\color{Blue} #1}}
  \newcommand{\termF}[1]{{\color{Brown} #1}}

  On the $n$-dimensional complex projective space $\proj^n$,
  the canonical bundle is $K_{\proj^n} =\holo_{\proj^n}\paren{-n-1}
  =\holo\paren{-n-1}$.
  Let $X_0, \dots , X_n$ be the homogeneous coordinates and let $U_j
  := \set{X_j \neq 0}$ for $j=0,\dots, n$ be the open
  sets which constitute a finite cover of $\proj^n$.
  Consider the line bundle $L=\holo\paren{n+1}$ endowed with a smooth
  metric $e^{-\vphi_L}$ whose potential $\vphi_L =
  \set{\vphi_{L,U_j}}_{j=0,\dots, n}$ is given by
  \begin{equation*}
    \vphi_{L,U_j} := (n+1) \log\paren{\frac{\abs{X_0}^2 +\dots
        +\abs{X_n}^2}{\abs{X_j}^2}} +1 \; .
  \end{equation*}
  Define also
  \begin{equation*}
    \psi := \sum_{j=1}^\sigma \log\abs{\frac{X_j}{X_0}}^2
    -\sigma\log\paren{1+\sum_{j=1}^n \abs{\frac{X_j}{X_0}}^2} -1
  \end{equation*}
  for some integer $\sigma \in [1,n]$.
  Notice that $\psi$ is a well-defined function defined on $\proj^n$
  with $\psi < -1$ on $\proj^n$ and $\psi^{-1}(-\infty) = \bigcup_{j=1}^\sigma
  \set{X_j=0}$.
  Indeed, the family $\set{\mtidlof{\vphi_L+m\psi}}_{m \in\fieldR_{\geq 0}}$ of
  multiplier ideal sheaves has the first jumping number $m = 1$ and
  the annihilator of
  $\frac{\mtidlof{\vphi_L}}{\mtidlof{\vphi_L+\psi}}$ is given by
  \begin{equation*}
    \Ann_\holo\paren{\frac{\mtidlof{\vphi_L}}{\mtidlof{\vphi_L+\psi}}}
    =\defidlof{\sum_{j=1}^\sigma \set{X_j=0}} = \defidlof{S}\; ,
  \end{equation*}
  where the right-hand-side is the defining ideal sheaf of the reduced
  divisor $S := \sum_{j=1}^\sigma \set{X_j=0}$.
  The mlc of $(\proj^n, S)$ has codimension $\sigma$ in $\proj^n$.
  
  Notice also that $K_{\proj^n} \otimes L \isom \holo$.
  Let $x_1, \dots, x_n$ be the inhomogeneous coordinates on $U_0$.
  Then, $f := dx_1 \wedge \dots \wedge dx_n$ can be viewed as a global
  section of $K_{\proj^n} \otimes L$ which spans the vector space
  $\cohgp0[\proj^n]{K_{\proj^n} \otimes L} \isom \fieldC$.
  Let $x_j = r_je^{\cplxi \:\theta_j}$ for $j=1,\dots,n$ be the expression
  of $x_j$ in polar coordinates.
  Set also $\abs{\vect x}^2 = \vect r^2 := \sum_{j=1}^n \abs{x_j}^2 =
  \sum_{j=1}^n r_j^2$ for convenience.
  The residue function $\RTF|f|(\eps)[\sigma]$ of $f$ for the lc centres of
  $(\proj^n,S)$ of codimension $\sigma$ is then given by
  \begin{equation*}
    \RTF|f|(\eps)[\sigma] =\eps \int_{\proj^n} \frac{\abs f^2
      \:e^{-\vphi_L-\psi}}{\abs\psi^\sigma
      \paren{\log\abs{\ell\psi}}^{1+\eps}}
    =\eps \int_{U_0} \frac{\abs{dx_1\wedge \dotsm \wedge dx_n}^2}
    {\abs{x_1\dotsm x_\sigma}^2 \paren{1+\abs{\vect x}^2}^{n-\sigma+1} \abs\psi^\sigma
    \paren{\log\abs{\ell\psi}}^{1+\eps}} \; .
  \end{equation*}

  For simplicity, only the cases with $n=3$ and $\sigma=1$ and $2$ are
  considered.

  For the case $n=3$ and $\sigma=1$, the mlc of $(\proj^3, S)$ has
  codimension $1$.
  Taking
  \begin{equation*}
    \ell := e^b \quad\text{for some constant}\; b\geq 1 \; ,
  \end{equation*} 
  the function $\RTF|f|(\eps)[1]$ is given by
  \begin{align*}
    \RTF|f|(\eps)[1]
    &=\eps \int_{U_0} \frac{\abs{dx_1\wedge dx_2 \wedge dx_3}^2}
    {\abs{x_1}^2 \paren{1+\abs{\vect x}^2}^3 \abs\psi
    \paren{\log\abs{\ell\psi}}^{1+\eps}}
    =\int_{\fieldR_{\geq 0}^3} \frac{\pi^3  \:\eps \:dr_1^2\:dr_2^2 \:dr_3^2}
    {r_1^2 \paren{1+\vect r^2}^3 \abs\psi
      \paren{\log\abs{\ell\psi}}^{1+\eps}} \\
    &=\int_{\fieldR_{\geq 0}^3} \frac{\pi^3  \:\eps \:d\psi\:dr_2^2 \:dr_3^2}
    {\underbrace{\paren{1-\frac{r_1^2}{1+\vect r^2}}}_{=
      r_1^2 \fdiff{r_1^2}[\psi]} \paren{1+\vect r^2}^3 \abs\psi
      \paren{\log\abs{\ell\psi}}^{1+\eps}} 
    =\int_{\fieldR_{\geq 0}^3} \frac{\pi^3  \:d\paren{\frac
      1{\paren{\log\abs{\ell\psi}}^\eps}} \:dr_2^2 \:dr_3^2}
      {\paren{1+r_2^2+r_3^2} \paren{1+\vect r^2}^2} \\
    &\overset{\mathclap{\text{int.~by parts}}}= \qquad
      \begin{aligned}[t]
        &\int_{\fieldR_{\geq 0}^2} \bval{ \frac{\pi^3 
          }{\paren{\log\abs{\ell\psi}}^\eps \paren{1+r_2^2+r_3^2}
            \paren{1+\vect r^2}^2}
        }_{\mathrlap{r_1=0}}^{\mathrlap{r_1=\infty}} \quad dr_2^2
        \:dr_3^2 \\
        &+\int_{\fieldR_{\geq 0}^3} \frac{2\pi^3  \: dr_1^2 \:dr_2^2
          \:dr_3^2} {\paren{\log\abs{\ell\psi}}^\eps
          \paren{1+r_2^2+r_3^2} \paren{1+\vect r^2}^3}
      \end{aligned}
    \\
    &=\int_{\fieldR_{\geq 0}^3} \frac{2\pi^3  \: dr_1^2 \:dr_2^2
          \:dr_3^2} {\paren{\log\abs{\ell\psi}}^\eps
      \paren{1+r_2^2+r_3^2} \paren{1+\vect r^2}^3} \; . 
  \end{align*}
  Notice that the last expression is a decreasing function in $\eps$
  as $\log\abs{\ell\psi} \geq b \geq 1$.
  Therefore, one has
  \begin{equation*}
    \RTF|f|(1)[1] \leq \RTF|f|(0)[1] \; .
  \end{equation*}
  Indeed, even $\RTF|f|(\eps)[1] \leq \RTF|f|(0)[1]$ holds true for all
  $\eps \geq 0$.

  For the case $n=3$ and $\sigma=2$, the mlc of $(\proj^3, S)$ has
  codimension $2$.
  Noticing that $r_j^2 \fdiff{r_j^2}[\psi] =1-\frac{2 r_j^2}{1+\vect
    r^2}$ for $j = 1, 2$ have zeros in $U_0$, the computation has to be adjusted a
  bit.
  Under the same notation, the function
  $\RTF|f|(\eps)[2]$ is given by
  \begin{align*}
    \RTF|f|(\eps)[2]
    &=\eps \int_{U_0} \frac{ \abs{dx_1\wedge dx_2 \wedge dx_3}^2}
    {\abs{x_1}^2 \abs{x_2}^2 \paren{1+\abs{\vect x}^2}^2 \abs\psi^2
    \paren{\log\abs{\ell\psi}}^{1+\eps}}
    =\int_{\fieldR_{\geq 0}^3} \frac{\pi^3  \:\eps \:dr_1^2\:dr_2^2 \:dr_3^2}
    {r_1^2r_2^2 \paren{1+\vect r^2}^2 \abs\psi^2
      \paren{\log\abs{\ell\psi}}^{1+\eps}} \\
    &= \int_{\set{r_1 < r_2}} + \int_{\set{r_2 < r_1}}
      \overset{\text{by symmetry}}= \;
      2 \int_{\fieldR_{\geq 0}^2} \int_{r_1=0}^{\mathrlap{r_1=r_2}} \frac{\pi^3  \:\eps \:dr_1^2}
    {r_1^2r_2^2 \paren{1+\vect r^2}^2 \abs\psi^2
      \paren{\log\abs{\ell\psi}}^{1+\eps}} \:dr_2^2 \:dr_3^2 \\
    &=\int_{\fieldR_{\geq 0}^2} \int_{r_1=0}^{\mathrlap{r_1=r_2}}
      \frac{2 \pi^3  \:\eps \:d\paren{\frac 1{\abs\psi}} }
    {r_2^2 \paren{1-\frac{2 r_1^2}{1+\vect r^2}} \paren{1+\vect r^2}^2
      \paren{\log\abs{\ell\psi}}^{1+\eps}} \:dr_2^2 \:dr_3^2 \\
    &\overset{\mathclap{\text{int.~by parts}}}= \qquad
      \begin{aligned}[t]
        &\termB{\int_{\fieldR_{\geq 0}^2}
        \frac{2 \pi^3  \:\eps \:dr_2^2 \:dr_3^2}
        {r_2^2 \paren{1+r_3^2} \paren{1+2r_2^2 +r_3^2} \parres{
            \abs\psi \paren{\log\abs{\ell\psi}}^{1+\eps}}_{\mathrlap{r_1=r_2}}}}
        \\
        &+\int_{\fieldR_{\geq 0}^2}\int_{r_1=0}^{\mathrlap{r_1=r_2}} \;\;
        \frac{2 \pi^3  \:\eps
          \paren{\frac{dr_1^2}{1+\vect r^2}
            -\frac{dr_1^2}{1+\vect r^2 -2r_1^2}
            -\termF{\frac{\paren{1+\eps} \:d\psi}{\abs\psi \log\abs{\ell\psi}} }}
          }
        {r_2^2 \paren{1+\vect r^2 -2r_1^2} \paren{1+\vect r^2} 
          \abs\psi \paren{\log\abs{\ell\psi}}^{1+\eps}} dr_2^2 \:dr_3^2
      \end{aligned} \\
    &=
        \termB{\int_{\fieldR_{\geq 0}^2} 
        \frac{2 \pi^3  \:d\paren{\frac 1{\paren{\log\abs{\ell\psi}}^\eps}} \:dr_3^2}
        {\paren{1+r_3^2}
          \underbrace{\paren{2-\frac{4r_2^2}{1+2r_2^2+r_3^2}}}_{r_2^2
            \fdiff{r_2^2}\paren{\res\psi_{r_1=r_2}}} \paren{1+2r_2^2
            +r_3^2}}} \\
        &\hphantom{= \quad} -\underbrace{\int_{\fieldR_{\geq 0}^3 \cap \set{r_1 < r_2}}
        \frac{4 \pi^3  \:\eps \: r_1^2 \:dr_1^2 \:dr_2^2 \:dr_3^2}
        {r_2^2 \paren{1+\vect r^2 -2r_1^2}^2 \paren{1+\vect r^2}^2 
          \abs\psi \paren{\log\abs{\ell\psi}}^{1+\eps}} }_{=: \: I(\eps)} \\
        &\hphantom{= \quad} -\termF{\int_{\fieldR_{\geq 0}^2}\int_{r_1=0}^{\mathrlap{r_1=r_2}}
        \frac{2 \pi^3  \:\eps \paren{1+\eps} \:dr_1^2}
        {r_1^2 r_2^2 \paren{1+\vect r^2}^2 
          \abs\psi^2 \paren{\log\abs{\ell\psi}}^{2+\eps}} dr_2^2 \:dr_3^2}
    \\
    &\overset{(\ell = e^b)}= \;
      \termB{\frac{\pi^3 }{b^\eps} \int_{\fieldR_{\geq 0}}
      \frac{dr_3^2}{\paren{1+r_3^2}^2}}
      -I(\eps)
      -\termF{\eps \RTF|f|(1+\eps)[2]}
    \quad =\termB{\frac{\pi^3 }{b^\eps}} -I(\eps) -\termF{\eps \RTF|f|(1+\eps)[2]} \; .
  \end{align*}
  Therefore, one obtains
  \begin{equation*}
    \RTF|f|(\eps)[2] +\eps \RTF|f|(1+\eps)[2] +I(\eps) = \frac{\pi^3
      }{b^\eps} \; .
  \end{equation*}
  The integral $I(\eps)$ is non-negative and can be expressed as
  \begin{equation*}
    I(\eps) =4 \pi^3  \:\eps
    \int_{\fieldR_{\geq 0}^2} \int_{r_2=r_1}^{\mathrlap{r_2 =\infty}}
        \frac{ r_1^2 \:dr_2^2}
        {r_2^2 \paren{1+\vect r^2 -2r_1^2}^2 \paren{1+\vect r^2}^2 
          \abs\psi \paren{\log\abs{\ell\psi}}^{1+\eps}} \:dr_1^2
        \:dr_3^2 \; .
  \end{equation*}
  Notice that the integral
  \begin{equation*}
    \eps \int_{r_2=r_1}^{\mathrlap{r_2 =\infty}}
    \frac{ dr_2^2}
    {r_2^2 \paren{1+\vect r^2 -2r_1^2}^2 \paren{1+\vect r^2}^2 
      \abs\psi \paren{\log\abs{\ell\psi}}^{1+\eps}}
  \end{equation*}
  \begin{itemize}
  \item is convergent for every $(r_1,r_3) \in \fieldR_{\geq 0}^2$
    even when $r_1 = 0$,
  \item as a function in $\eps$ is continuous on $(0,+\infty)$ and can be
    analytically continued across $0$ in view of Proposition
    \ref{prop:RTF-int-by-parts-formula} and Theorem
    \ref{thm:RTF-entire} for every $(r_1,r_3) \in \fieldR_{\geq 0}^2$
    even when $r_1 = 0$, and
  \item converges to $0$ as $\eps \tendsto 0^+$ whenever $r_1 > 0$.
  \end{itemize}
  Therefore, after taking into account the analytic continuation of
  the function $\eps \mapsto I(\eps)$ across $0$, the dominated
  convergence theorem infers that
  \begin{equation*}
    I(0) = \lim_{\eps \tendsto 0^+} I(\eps) = 0 \; .
  \end{equation*}
  As a result, since $\frac 1{b^\eps}$ is a decreasing function in
  $\eps$ with the choice $b \geq 1$, one obtains, for all $\eps \geq
  0$, that
  \begin{equation*}
    \RTF|f|(\eps)[2] \leq \RTF|f|(\eps)[2] +\eps \RTF|f|(1+\eps)[2] +I(\eps)
    =\frac{\pi^3 }{b^\eps} \leq \pi^3 
    =\RTF|f|(0)[2] +0 \cdot \RTF|f|(1)[2] +I(0) =\RTF|f|(0)[2] \; ,
  \end{equation*}
  as desired.
\end{example}

\begin{example}
  This example has the same setup as in Example
  \ref{ex:K_X+L=trivial}, except that now the line bundle $L$ is
  $\holo(n+2)$ over $\proj^n$, endowed with the potential $\vphi_L =
  \set{\vphi_{L,U_j}}_{j=0,\dots,n}$ given by
  \begin{equation*}
    \vphi_{L,U_j} := (n+2) \log\paren{\frac{\abs{X_0}^2 +\dots
        +\abs{X_n}^2}{\abs{X_j}^2}} +1 \; . 
  \end{equation*}
  Then, $K_{\proj^n} \otimes L \isom \holo(1)$ and its global sections
  $f_j := x_j dx_1 \wedge \dotsm \wedge dx_n$ for $j=0,\dots, n$
  (where $x_0=1$), expressed by their representatives on $U_0$, form a
  basis of $\cohgp0[\proj^n]{K_{\proj^n} \otimes L} \isom
  \fieldC^{n+1}$.

  With the same choice of $\psi$ and using same notation as in Example
  \ref{ex:K_X+L=trivial}, the case of $n=3$ and $\sigma=2$ is considered here.
  In this case, $S = \set{X_1 =0} + \set{X_2=0}$ and the mlc of
  $(\proj^3,S)$ has codimension $2$.
  It is clear that both $f_1$ and $f_2$ vanish on $\lcc[2]<\proj^3>$ while
  $f_0$ and $f_3$ are non-trivial there.

  Since the weight $\frac{e^{-\vphi_L-\psi}}{\abs\psi^\sigma
    \paren{\log\abs{\ell\psi}}^{1+\eps}}$ in the integral
  $\RTF|\cdot|(\eps)[\sigma]$ is independent of $\theta_j$'s for all
  $\sigma\in\Nnum$ and $\eps \in \fieldR$, it is easy to see that $f_0,
  \dots, f_3$ are orthogonal with respect to
  $\RTF<\cdot,\cdot>(\eps)[2]$ for all $\eps \geq 0$.
  Therefore, the section $f_3$, for example, is the minimal
  holomorphic extension of $\res{f_3}_{\lcc[2]<\proj^3>}$ with respect
  to $\RTF|\cdot|(\eps)[2]$ for any $\eps > 0$.

  As an illustration, under the same normalisation of
  $\log\abs{\ell\psi}$ as in Example \ref{ex:K_X+L=trivial}
  (i.e.~$\ell = e^b$ with $b \geq 1$),
  consider the function $\RTF|f_3|(\eps)[2]$, which gives
  \begin{align*}
    \RTF|f_3|(\eps)[2]
    &=\eps \int_{U_0} \frac{ \abs{x_3}^2 \abs{dx_1\wedge dx_2 \wedge dx_3}^2}
      {\abs{x_1}^2 \abs{x_2}^2 \paren{1+\abs{\vect x}^2}^3 \abs\psi^2
      \paren{\log\abs{\ell\psi}}^{1+\eps}}
      =\int_{\fieldR_{\geq 0}^3} \frac{\pi^3  \:\eps \:r_3^2\:dr_1^2\:dr_2^2 \:dr_3^2}
      {r_1^2r_2^2 \paren{1+\vect r^2}^3 \abs\psi^2
      \paren{\log\abs{\ell\psi}}^{1+\eps}} \\
    &=\int_{\fieldR_{\geq 0}^3} \frac{\pi^3  \:\eps}
      {r_1^2r_2^2 \alert{\paren{1+\vect r^2}^{2}} \abs\psi^2
      \paren{\log\abs{\ell\psi}}^{1+\eps}}
      \alert{\frac{\paren{1+r_3^2}}{\paren{1+\vect r^2}}}
      \:dr_1^2\:dr_2^2  \frac{r_3^2 \:dr_3^2}{\alert{\paren{1+r_3^2}}} \\
    &=
      \begin{aligned}[t]
        &\int_{\fieldR_{\geq 0}^3} \frac{\pi^3 \:\eps
          \:dr_1^2\:dr_2^2} {r_1^2r_2^2
          \paren{1+\vect r^2}^{2} \abs\psi^2
          \paren{\log\abs{\ell\psi}}^{1+\eps}}
        \frac{r_3^2 \:dr_3^2}{\paren{1+r_3^2}} \\
        &-\int_{\fieldR_{\geq 0}^3} \frac{\pi^3 \:\eps
          \paren{r_1^2+r_2^2} 
          \:dr_1^2\:dr_2^2 } {r_1^2r_2^2
          \paren{1+\vect r^2}^{3} \abs\psi^2
          \paren{\log\abs{\ell\psi}}^{1+\eps}}
        \frac{r_3^2 \:dr_3^2}{\paren{1+r_3^2}}
        \quad=: I_1(\eps) -I_2(\eps) \; .
      \end{aligned} 
  \end{align*}
  Notice that, in view of Fubini's theorem, the integral $I_1(\eps)$
  can be handled exactly as in Example \ref{ex:K_X+L=trivial} (with
  $n=3$, $\sigma=2$), and thus
  \begin{equation*}
    I_1(\eps) \leq I_1(0)
  \end{equation*}
  for all $\eps \geq 0$.
  Since $I_2(\eps)$ is non-negative for all $\eps >0$ and $I_2(0)=0$
  by Corollary \ref{cor:RTF-at-0} and
  \cite{Chan&Choi_ext-with-lcv-codim-1}*{Prop.~2.2.1}, it follows that
  \begin{equation*}
    \RTF|f_3|(\eps)[2] \leq \RTF|f_3|(\eps)[2] +I_2(\eps) =I_1(\eps)
    \leq I_1(0) = \RTF|f_3|(0)[2] -I_2(0) = \RTF|f_3|(0)[2]
  \end{equation*}
  for all $\eps \geq 0$, as desired.

  Using the same trick, one can also show that $\RTF|f_1|(\eps)[1]
  \leq \RTF|f_1|(0)[1]$ for all $\eps \geq 0$ (with $\psi$ remaining
  the same such that $S = \set{X_1=0} +\set{X_2=0}$) by reducing the
  computation to the situation in Example \ref{ex:K_X+L=trivial} (with
  $n=3$, $\sigma=1$).
\end{example}

The above examples all satisfy the usual curvature assumption for
$L^2$ extension, i.e.~there is some number $\delta > 0$ such that
$\vphi_L +\paren{1+\beta}\psi$ being psh for all $\beta \in
[0,\delta]$.

The following example satisfies the assumption $\vphi_L +\psi$ being
psh, but there exists no $\delta >0$ such that so is for
$\vphi_L+\paren{1+\delta}\psi$.

\begin{example} \label{eg:curvature-assumption-not-hold}
  This example has the same setup as in Example
  \ref{ex:K_X+L=trivial} such that the line bundle $L = \holo(n+1)$
  over $\proj^n$ endowed with the potential $\vphi_L =
  \set{\vphi_{L,U_j}}_{j=0,\dots,n}$ given by
  \begin{equation*}
    \vphi_{L,U_j} := (n+1) \log\paren{\frac{\abs{X_0}^2 +\dots
        +\abs{X_n}^2}{\abs{X_j}^2}} +1 \; ,
  \end{equation*}
  but the function $\psi$ is chosen to be
  \begin{equation*}
    \psi := \sum_{\alert{j=0}}^n \log\abs{\frac{X_j}{X_0}}^2
    -\alert{\paren{n+1}}\log\paren{1+\sum_{j=1}^n
      \abs{\frac{X_j}{X_0}}^2} -1 \; .
  \end{equation*}
  Then, the mlc of $(\proj^n,S) = (\proj^n,\vphi_L,\psi)$ are $n+1$
  (reduced) \emph{points} each located at the origin given by the
  inhomogeneous coordinates on $U_j$ for $j = 0, \dots , n$.

  The global section $f := dx_1 \wedge \dots \wedge dx_n$ (its
  representative on $U_0$) of $K_{\proj^n} \otimes L \isom \holo$ is
  considered.
  The case $n=3$ is computed here.

  Use the same notation as in Example \ref{ex:K_X+L=trivial} and
  choose $\ell := e^b$ for some $b \geq 1$ as before.
  Note that $\proj^3$ is the closure of the (disjoint) union of the unit
  polydiscs $\Delta_j \subset U_j$ centred at the origin in each of
  their coordinate charts for $j=0,1,2,3$.
  Let $y_1$, $y_2$ and $y_3$ be the inhomogeneous coordinates on $U_1$, where
  $y_1 = \frac 1{x_1}$, $y_2 = \frac{x_2}{x_1}$ and $y_3 =
  \frac{x_3}{x_1}$.
  Set $\abs{\vect y}^2 := \abs{y_1}^2 +\abs{y_2}^2 +\abs{y_3}^2$.
  On $U_1$, the function $\psi$ is given by
  \begin{equation*}
    \res\psi_{U_1} = \log\paren{\frac{\abs{y_1}^2\abs{y_2}^2
        \abs{y_3}^2}{\paren{1+\abs{\smash[b]{\vect y}}^2}^4}} -1 \; .
  \end{equation*}
  Indeed, $\psi$ has the same formula on all of the open sets $U_j$
  for $j=0,1,2,3$. 
  Recall from Corollary \ref{cor:RTF-at-0} that the residue norm
  $\RTF|f|(0)[3]$ does not change if $\psi$ is only altered by a
  smooth function.
  Then, one can make use of the estimates
  \begin{equation*}
    \abs{\res\psi_{\Delta_0}}
    =\abs{\log{\frac{\abs{x_1}^2 \abs{x_2}^2
    \abs{x_3}^2}{\paren{1+\abs{\vect x}^2}^4} }} +1
    \geq -\log\paren{\abs{x_1}^2 \abs{x_2}^2 \abs{x_3}^2} +1 =:
      \abs{\psi_0} \geq 1 \; .
  \end{equation*} 
  From the symmetry of the integrand in $\RTF|f|(\eps)[3]$, one
  obtains, for any $\eps >0$,
  \begin{align*}
    \RTF|f|(\eps)[3]
    &=\int_{U_0} \frac{ \eps \:\abs{dx_1\wedge dx_2 \wedge dx_3}^2}
      {\abs{x_1}^2 \abs{x_2}^2 \abs{x_3}^2 \abs\psi^3
      \paren{\log\abs{\ell\psi}}^{1+\eps}} \\
    &=4\int_{\Delta_0} \frac{ \eps \:\abs{dx_1\wedge dx_2 \wedge dx_3}^2}
      {\abs{x_1}^2 \abs{x_2}^2 \abs{x_3}^2 \abs\psi^3
      \paren{\log\abs{\ell\psi}}^{1+\eps}}
    \\
    &\leq 4\pi^3 \eps \int_{[0,1]^3} \frac{dr_1^2\:dr_2^2 \:dr_3^2}
      {r_1^2 r_2^2 r_3^2 \abs{\psi_0}^3
      \paren{\log\abs{\ell\psi_0}}^{1+\eps}} 
      =2\pi^3\eps \int_{[0,1]^3} \frac{d\paren{\frac
      1{\abs{\psi_0}^2}} \:dr_2^2 \:dr_3^2} 
      {r_2^2 r_3^2 \paren{\log\abs{\ell\psi_0}}^{1+\eps}}
      =: F(\eps) \\
    &=2\pi^3\eps \int_{[0,1]^2} \frac{dr_2^2 \:dr_3^2}
      {r_2^2 r_3^2 \parres{\abs{\psi_0}^2
      \paren{\log\abs{\ell\psi_0}}^{1+\eps}}_{r_1\mathrlap{=1}}}
      -\frac\eps 2 F(1+\eps) \\
    &=2\pi^3\eps \int_{[0,1]} \frac{dr_3^2}
      {r_3^2 \parres{\abs{\psi_0}
      \paren{\log\abs{\ell\psi_0}}^{1+\eps}}_{r_1 = \mathrlap{r_2=1}}}
      -\eps \paren{F(1+\eps) +\frac{1+\eps}2 F(2+\eps)}
      -\frac\eps 2 F(1+\eps) \\
    &=\frac{2\pi^3}{b^\eps} -\frac{3\eps}2 F(1+\eps)
      -\frac{\eps\paren{1+\eps}}{2} F(2+\eps) \; .
  \end{align*}
  The first term on the right-hand-side is decreasing when $b \geq 1$
  and the remaining terms are negative and vanish when $\eps =0$.
  It follows that
  \begin{equation*}
    \RTF|f|(\eps)[3] \leq F(\eps) \leq F(0) = \RTF|f|(0)[3]
  \end{equation*}
  for all $\eps \geq 0$.
\end{example}

\begin{remark}
  In all of the above examples, the computation of
  $\RTF|f|(\eps)[\sigma]$ is simply reproving Proposition
  \ref{prop:RTF-int-by-parts-formula} and Remark
  \ref{rem:RTF-int-by-parts-formula-small-codim} without using any
  partition of unity.
  However, even in such simple cases, one still has to partition the
  region of integration according to the zero loci of derivatives of
  $\psi$ in order to apply integration by parts.
  Indeed, when the subregion of integration does not contain an lc
  centre of codimension $\sigma$ (when $\RTF|f|(\eps)[\sigma]$ is
  under consideration), positive summand which is not decreasing in
  $\eps$ may arise (for example, $\frac{\pi^3 \eps}{b^{1+\eps}}$
  instead of $\frac{\pi^3}{b^\eps}$ may show up). 
  It is then difficult to claim in general that the sum of the
  computation of the integral $\RTF|f|(\eps)[\sigma]$ on
  different subregions yields only decreasing function in $\eps$, an
  argument essential for proving the $L^2$ estimates in the above
  examples.
  The difficulty is avoided in those examples by using the
  symmetry of the integrand on different subregions.

  The same difficulty persists when partition of unity is used which
  results in the identities in Proposition
  \ref{prop:RTF-int-by-parts-formula} and Remark
  \ref{rem:RTF-int-by-parts-formula-small-codim}.
  Moreover, the form $G_\sigma$ in those identities may not have a
  definite sign, which puts an extra hurdle to the analysis.
\end{remark}













\begin{bibdiv}
  \begin{biblist}
    \IfFileExists{references.ltb}{
      \bibselect{references}
    }{
      \bib{ASWY-gKingsFormula}{article}{
  author={Andersson, Mats},
  author={Samuelsson Kalm, H\aa kan},
  author={Wulcan, Elizabeth},
  author={Yger, Alain},
  title={Segre numbers, a generalized King formula, and local intersections},
  journal={J. Reine Angew. Math.},
  volume={728},
  date={2017},
  pages={105--136},
  issn={0075-4102},
  review={\MR {3668992}},
  doi={10.1515/crelle-2014-0109},
}

\bib{Bjork&Samuelsson}{article}{
  author={Bj{\"o}rk, Jan-Erik},
  author={Samuelsson, H{\aa }kan},
  title={Regularizations of residue currents},
  journal={J. Reine Angew. Math.},
  volume={649},
  date={2010},
  pages={33--54},
  issn={0075-4102},
  review={\MR {2746465}},
  doi={10.1515/CRELLE.2010.087},
}

\bib{Cao&Demailly&Matsumura}{article}{
  author={Cao, JunYan},
  author={Demailly, Jean-Pierre},
  author={Matsumura, Shinichi},
  title={A general extension theorem for cohomology classes on non reduced analytic subspaces},
  journal={Sci. China Math.},
  volume={60},
  date={2017},
  number={6},
  pages={949--962},
  issn={1674-7283},
  review={\MR {3647124}},
  doi={10.1007/s11425-017-9066-0},
}

\bib{Cao&Paun_OT-ext}{article}{
  author={Cao, Junyan},
  author={P\u aun, Mihai},
  title={On the Ohsawa--Takegoshi extension theorem},
  eprint={arXiv:2002.04968 [math.CV]},
  pages={36},
  date={2020},
  note={with an appendix by Bo Berndtsson},
}

\bib{Chan&Choi_ext-with-lcv-codim-1}{article}{
  author={Chan, Tsz On Mario},
  author={Choi, Young-Jun},
  title={Extension with log-canonical measures and an improvement to the plt extension of Demailly--Hacon--P\u aun},
  note={also available at arXiv:1912.08076 [math.CV]},
  eprint={https://rdcu.be/cn5N6},
  journal={Math. Ann.},
  date={2021},
  doi={10.1007/s00208-021-02152-3},
}

\bib{Demailly_extension}{article}{
  author={Demailly, Jean-Pierre},
  title={Extension of holomorphic functions defined on non reduced analytic subvarieties},
  conference={ title={The legacy of Bernhard Riemann after one hundred and fifty years. Vol. I}, },
  book={ series={Adv. Lect. Math. (ALM)}, volume={35}, publisher={Int. Press, Somerville, MA}, },
  date={2016},
  pages={191--222},
  review={\MR {3525916}},
  eprint={arXiv:1510.05230 [math.CV]},
}

\bib{DHP}{article}{
  author={Demailly, Jean-Pierre},
  author={Hacon, Christopher D.},
  author={P\u {a}un, Mihai},
  title={Extension theorems, non-vanishing and the existence of good minimal models},
  journal={Acta Math.},
  volume={210},
  date={2013},
  number={2},
  pages={203--259},
  issn={0001-5962},
  review={\MR {3070567}},
  doi={10.1007/s11511-013-0094-x},
}

\bib{Kollar_Sing-of-MMP}{book}{
  author={Koll\'{a}r, J\'{a}nos},
  title={Singularities of the minimal model program},
  series={Cambridge Tracts in Mathematics},
  volume={200},
  note={With a collaboration of S\'{a}ndor Kov\'{a}cs},
  publisher={Cambridge University Press, Cambridge},
  date={2013},
  pages={x+370},
  isbn={978-1-107-03534-8},
  review={\MR {3057950}},
  doi={10.1017/CBO9781139547895},
}

\bib{Samuelsson_residue}{article}{
  author={Samuelsson, H\aa kan},
  title={Analytic continuation of residue currents},
  journal={Ark. Mat.},
  volume={47},
  date={2009},
  number={1},
  pages={127--141},
  issn={0004-2080},
  review={\MR {2480918}},
  doi={10.1007/s11512-008-0086-9},
}

    }
  \end{biblist}
\end{bibdiv}


\end{document}
